\font\msbm=msbm10
\numberwithin{equation}{section}
\theoremstyle{plain}
\newtheorem{theorem}{Theorem}[section]
\newtheorem{corollary}[theorem]{Corollary}
\newtheorem{proposition}[theorem]{Proposition}
\theoremstyle{definition}
\newtheorem{remark}[theorem]{Remark}
\def\mathbb#1{\hbox{\msbm{#1}}}
\newcommand{\ltwo}{\ensuremath{L_{2}(\T^d)}}
\newcommand{\dd}{ {\rm d}  }
\newcommand{\field}[1]{\ensuremath{\mathds{#1}}}
\newcommand{\N}{\field{N}}
\newcommand{\Prob}{\field P}
\newcommand{\C}{\field{C}}
\newcommand{\tor}{\field T} 
\newcommand{\Z}{\field{Z}}
\newcommand{\n}{\ensuremath{{\N}_0}}
\newcommand{\R}{\field{R}}
\newcommand{\T}{\field{T}}
\newcommand{\Id}{\ensuremath{\mathrm{Id}}}
\newcommand{\mix}{{\rm mix}}
\newcommand{\bk}{\mathbf{k}}
\newcommand{\bx}{\mathbf{x}}
\newcommand{\by}{\mathbf{y}}
\newcommand{\bL}{\mathbf{L}}
\newcommand{\bX}{\mathbf{X}}
\newcommand{\be}{\begin{equation}}
\newcommand{\ee}{\end{equation}}
\newcommand{\beq}{\begin{eqnarray}}
\newcommand{\beqq}{\begin{eqnarray*}}
\newcommand{\eeq}{\end{eqnarray}}
\newcommand{\eeqq}{\end{eqnarray*}}
\newcommand{\trace}[1]{\operatorname{tr}(#1)}
\newcommand{\diag}{\mathrm{diag}}
\begin{document}

\title{A note on sampling recovery of multivariate functions in the uniform norm}
\author{Kateryna Pozharska, Tino Ullrich\footnote{Corresponding author: tino.ullrich@mathematik.tu-chemnitz.de} \\\\
Institute of Mathematics of NAS of Ukraine, 01024 Kyiv, Ukraine\\
TU Chemnitz, Faculty of Mathematics, 09107 Chemnitz, Germany}

\maketitle

\begin{abstract}
\small
We study the recovery of multivariate functions from reproducing kernel Hilbert spaces in the uniform norm.
Our main interest is to obtain preasymptotic estimates for the corresponding sampling numbers. We obtain results in terms of the decay of related singular numbers of the compact embedding into $L_2(D,\varrho_D)$ multiplied with the supremum of the Christoffel function of the subspace spanned by the first $m$ singular functions. Here the measure $\varrho_D$ is at our disposal. As an application we obtain near optimal upper bounds for the sampling numbers for periodic Sobolev type spaces with general smoothness weight. Those can be bounded in terms of the corresponding benchmark approximation number in the uniform norm, which allows for preasymptotic bounds. By applying a recently introduced sub-sampling technique related to Weaver's conjecture we mostly lose a $\sqrt{\log n}$ and sometimes even less. Finally we point out a relation to the corresponding Kolmogorov numbers.
\\

\medskip
\noindent {\textit{Keywords and phrases}} : Sampling recovery, least squares approximation, random sampling, rate of convergence, uniform norm

\medskip

\small%
\noindent {\textit{2010 AMS Mathematics Subject Classification}} :
41A25, 
41A60, 
41A63, 
42A10, 
68W40, 
94A20  

\end{abstract}

\section{Introduction}

We consider the problem of the recovery of complex-valued multivariate functions on a compact domain $D \subset \R^d$ in the uniform norm. We are allowed to use function samples on a suitable set of nodes $\bX :=\{\bx^1,...,\bx^n\} \subset D$. The functions are modeled as elements from some reproducing kernel Hilbert space $H(K)$ with kernel $K\colon D\times D\to \C$. The nodes $\bX$ are drawn independently according to a tailored probability measure depending on the spectral properties of the embedding of $H(K)$ in $L_2(D,\varrho_D)$. This ``random information'' is used for the whole class of functions and our main focus in this paper is on worst-case errors. We additionally apply a recently introduced sub-sampling technique, see \cite{Ni_Sc_Ut20} to further reduce the sampling budget. Note that in one scenario the measure $\varrho_D$ is at our disposal and represents a certain degree of freedom. In another scenario the measure $\varrho_D$ is a fixed measure, namely the one where the data is coming from.

Authors distinguish two main paradigms for the recovery of functions from ``samples''. Here ``samples'' refer to the available information. The first one uses general ``linear'' information about functions, i.e., evaluations given by $n$ arbitrary linear continuous functionals as a number of inner products of the Hilbert space,  e.g., Fourier or wavelet coefficients. The corresponding quantity that measures the minimal worst case error is given by the Gelfand numbers/widths, see the monographs \cite{NoWoI,NoWoII,NoWoIII}. In case of a Hilbert source space (like in our setting) these quantities equal the so-called approximation numbers which are usually denoted by $a_n$ and represent the worst-case error obtained by linear recovery algorithms. Precisely, let $H(K)$ be a reproducing kernel Hilbert space (RKHS) of multivariate complex-valued functions $f(\bx)= f(x_1, \dots, x_d)$, $\bx \in D\subset \R$  with the kernel
 $K\colon D\times D \rightarrow \C$. Consider the identity operator $\Id\colon H(K) \rightarrow F $. Further, by $\mathcal{L}(H(K), F)$ we denote the set of linear continuous operators $A\colon H(K) \rightarrow F$. The approximation numbers $a_n ( \Id)$ are defined as follows
\be\label{def_approxim_numbers}
a_n \left( \Id \colon H(K) \rightarrow F \right):=\inf\limits_{ \substack{ A \in \mathcal{L}(H(K), F) \\ {\rm rank} A<n }} \sup\limits_{ \| f \|_{H(K)} \leq 1} \|  f - Af  \|_F.
\ee
If $F$ is a Hilbert space, then $a_n$ represents the singular numbers $\sigma_n$ of the corresponding embedding.

Several practical applications are modeled with ``standard information'', i.e., a finite number of function values at some points.
Let $f(\boldsymbol{\rm x}^1), \dots, f(\boldsymbol{\rm x}^n)$ be the given data, where
$\boldsymbol{\rm X} = \{ \boldsymbol{\rm x}^1, \dots,  \boldsymbol{\rm x}^n \}$, $\boldsymbol{\rm x}^i\subset D$, $i=1,\dots,n $,
are the nodes.
The corresponding sampling numbers are defined as follows:
\be\label{def_sampling_numbers}
g_{n} \left( \Id\colon H(K) \rightarrow F \right) :=
\inf_{\boldsymbol{\rm X}= \{\boldsymbol{\rm x}^1, \dots,  \boldsymbol{\rm x}^n \}}
\inf_{ R \in \mathcal{L}(\C^n, F) }
\sup_{\|f \|_{H(K)} \leq 1}  \| f - R \left( f( \boldsymbol{\rm X} )\right) \|_{F}.
\ee
Note that we always have $a_n ( \Id) \leq g_n ( \Id)$.

Our goal is to recover functions using a (weighted) least squares algorithm and measure the error in $\ell_\infty$. Function values are taken at independently drawn random nodes in $D$ according to a tailored probability measure $\varrho$. The main feature of this approach is that the nodes are drawn once for the whole class. This is usually termed as ``random information''. Note that we do not investigate Monte Carlo algorithms in this paper.

A recent breakthrough was made by D.~Krieg and M.~Ullrich~\cite{KrUl19}
in establishing a new connection between sampling numbers and singular values for $L_2$-approximation. This has been extended by L. K\"{a}mmerer, T. Ullrich and T. Volkmer~\cite{KUV19} and bounds with high probability have been obtained, see also M.~Moeller and T.~Ullrich~\cite{MoUll20} and M.Ullrich \cite{Ul20} for further refinements. In all the mentioned contributions we need a logarithmic oversampling, i.e, $n \asymp m\log m$. A new sub-sampling technique has been applied by N. Nagel, M. Sch\"afer and T. Ullrich in \cite{Ni_Sc_Ut20}, where it is shown that $n = \mathcal{O}(m)$ samples are sufficient. For non-Hilbert function classes we refer to
D.~Krieg and M.~Ullrich~\cite{KrUl20} and V.N.~Temlyakov~\cite{Teml2020}, where a connection to Kolmogorov widths is pointed out. There is also a relation in the $\ell_\infty$-setting, see
\cite{KaTi2021} for details.

In the present paper we obtain  worst case recovery guarantees with high probability for the weighted least squares recovery operator $\widetilde{S}_{\bX}^m$
(see Algorithm~\ref{algo1:reweighted}) where the error is measured in the supremum norm of the space $\ell_{\infty}(D)$ of bounded on $D$ functions. The operator $\widetilde{S}_{\bX}^m$ uses a density function $\varrho_m$ tailored to the spectral properties of the corresponding embedding into $L_2(D,\varrho_D)$, see \eqref{density_f} and \eqref{sd}.
We need to impose stronger conditions, namely we consider a compact set $D \subset \R^d$, a continuous kernel $K(\bx, \by)$ such that $\sup_{\bx \in D} \sqrt{K(\bx,\bx)} < \infty$ and a Borel measure $\varrho_D$ with full support
(such that Mercer's theorem is applicable). Then we get for $m:= \lfloor\frac{n}{c_1r\log n}\rfloor$, $r>1$, the following error bound with probability larger than $1-c_2n^{1-r}$
\begin{equation}\label{f1}
\sup\limits_{\|f\|_{H(K)}\leq 1} \| f- \widetilde{S}^{m}_{\boldsymbol{\rm X}}f\|^2_{\ell_{\infty}(D)}
\leq c_3 \max\Big\{\frac{N_{K,\varrho_D}(m)}{m}\sum\limits_{k\geq \lfloor m/2 \rfloor}\sigma_k^2,
\sum\limits_{k\geq \lfloor m/2 \rfloor}\frac{N_{K,\varrho_D}(4k)\sigma_k^2}{k}\Big\}
\end{equation}
with three universal constants $c_1, c_2, c_3 >0$, where $N_{K,\varrho_D}(m)$ denotes the supremum over the spectral function (Christoffel function) which is defined in Section~\ref{section_RKHS}. When applying this theorem to periodic spaces with mixed smoothness $H^s_{\mix}(\tor^d)$ we obtain the following result (see Theorem
\ref{preasymp_Hmix_sharp}).

If $s>(1+\log_2 d)/2$, $r>1$, $m=\lfloor n/(10r\log n)
\rfloor$ and $\beta = 2s/(1+\log_2 d)>1$ then
\begin{equation}\label{f1b}
	\sup_{\|f   \|_{H^{s}_{\mix}(\T^d)} \leq 1}  \| f - S_{\bX}^m f \|^2_{L_\infty(\T^d)}
\leq 1612 \left( \frac{16}{3} \right)^{\beta}  \frac{\beta}{\beta-1} \left( \frac{m}{2} -1 \right)^{-\beta +1}
\end{equation}
with probability larger than $1-3n^{1-r}$\,. Clearly, we have to determine the norm in $H^s_{\mix}(\T^d)$. In the above statement we used the $\#$-norm, see \eqref{periodic_H_sharp_norm}. This is a preasymptotic statement.

Note, that the sampling operator uses $n \asymp m\log m$ many sample points such that we obtain a bound for $g_{\lfloor cm\log m \rfloor}$. By a recently introduced sub-sampling technique (see \cite{Ni_Sc_Ut20}) we obtain for the sampling numbers another result which may be better in many cases, namely
\begin{equation}\label{f2}
 g_{n}( \Id\colon H(K) \rightarrow \ell_{\infty}(D))^2 \leq
 c_4\max\Big\{\frac{N_{K,\varrho_D}(n)\log n}{n}\sum\limits_{k\geq \lfloor c_5n \rfloor}\sigma_k^2,
\sum\limits_{k\geq \lfloor c_5n \rfloor}\frac{N_{K,\varrho_D}(4k)\sigma_k^2}{k}\Big\}\,,
\end{equation}
where $c_4, c_5>0$ are universal constants. We evaluate this bound for several relevant examples to obtain new bounds for sampling numbers. Here the question for reasonable assumptions arises to ensure the polynomial order $q^*$ of the error decay, i.e., the supremum over all $q$ such that the worst case error is of the order $n^{-q}$. We improve a result by F.~Y.~Kuo, G.~W.~Wasilkowski and H.~Wo\'{z}niakowski~\cite{KWW09} and show that $q^{\rm std}_\infty = p-1/2$ in case $N_{K,\varrho_D}(k) = \mathcal{O}(k)$ and $\sigma_n = \mathcal{O}(n^{-p})$. We actually prove more: If there is a measure $\varrho_D$ such that $N_{K,\varrho_D}(k) = \mathcal{O}(k)$ we obtain as a consequence of \eqref{f1} and \eqref{f2} together with \cite[Lem.\ 3.3]{CoKuSi16} the relation
\begin{equation}\label{main}
	g_n(\Id) \leq C_{\varrho_D,K}\min\{a_{\lfloor n/(c_6 \log n)\rfloor}(\Id),\sqrt{\log n}\cdot a_{\lfloor c_5 n \rfloor}(\Id)\}\,,
\end{equation}
where $c_6>0$ is an absolute constant and $C_{\varrho_D,K}>0$ depends on the kernel $K$ and $\varrho_D$, see Corollary \ref{cor2}. This improves on a recent result by L.~K\"{a}mmerer, see \cite[Theorem~4.11]{Kam19}, which was stated under stronger conditions (that is, in turn, an improvement of the previous result by L.~K\"{a}mmerer and T.~Volkmer~\cite{Kam_Vol19})). It shows, that in case $N_{K,\varrho_D}(k) = \mathcal{O}(k)$ we only loose a $\sqrt{\log n}$ or sometimes even less (assuming a polynomial decay of the $a_n$). In case that $N_{K,\varrho_D}(k) = \mathcal{O}(k^u)$, with $u>1$, we have at least that $q^{\rm std}_\infty \geq p-u/2$.

As to spaces with mixed smoothness $H^s_{\mix}(\T^d)$, $s>1/2$, we know by the work of V.N. Temlyakov \cite{Te93} that
$$
	g_n(\operatorname{I}_s) \asymp_{s, d} n^{-s+1/2}(\log n)^{(d-1)s}\,,
$$
where the optimal algorithm is a (deterministic) Smolyak type interpolation operator.
Our algorithm yields the bound $$
	g_n(\operatorname{I}_s:H^s_{\mix}(\T^d)\to L_\infty(\T^d)) \lesssim_{s, d} n^{-s+1/2}(\log n)^{(d-1)s+\min\{s-1/2,1/2\}}\,,
$$
which is only worse by a small $d$-independent $\log$-term. However, as indicated in \eqref{f1b} we also obtain preasymptotic bounds for $g_n(\operatorname{I}_s)$ from \eqref{f1} and \eqref{f2}. The open question remains whether random iid nodes are asymptotically optimal for this framework. Such a question has been recently answered positively for isotropic Sobolev spaces in \cite{Kr20}.

Let us point out that the above general results are also applicable to the more general periodic Sobolev type spaces $H^w(\T^d)$ with a weight sequence $(w(\bk))_{\bk \in \Z^d}$. If $(\tau_n)_n$ represents the non-increasing rearrangement of the square summable weight sequence $(1/w(\bk))_{\bk \in \Z^d}$ then we have
$$
  g_n(\operatorname{I}_w:H^w(\T^d) \to L_\infty(\T^d))^2 \leq c_7\min\Big\{\log n\sum\limits_{k\geq \lfloor c_9n \rfloor}\tau_k^2 , \sum\limits_{k\geq \lfloor n/(c_8 \log n) \rfloor} \tau_k^2\Big\}
$$
with three universal constants $c_7, c_8, c_9 >0$. In most of the cases we do not know whether there is a deterministic method which may compete with this bound.

{\bf Notation. }  By $\N$ we denote the set of natural numbers putting $\n = \N \cup \{0\}$, by $\Z$ - the set of integer, $\R$ - real, and $\C$ - complex numbers. As usual, $a\in \R$ is decomposed into $a=\lfloor a \rfloor + \{a\}$, where $0\leq \{a\} <1$ and $\lfloor a \rfloor \in \Z$. Let further $\N^d$, $\n^d$, $\Z^d$, $\R^d$, $\C^d$, $d \geq 2$, be the spaces of $d$-dimensional vectors $\bx = (x_1, \dots,x_d)$ with coordinates, respectively, from $\N$, $\n$, $\Z$, $\R$, and $\C$. The notation $\bx \leq \by$, where $\bx, \by \in \C^d$, stands for $x_i \leq y_i$, $i=1,\dots, d$ (relations $\geq$, $<$, $>$ and $=$ are defined analogically), $\overline{\bx}$ is complex conjugate to $\bx$, $\bx^{\top}$ is transpose to $\bx$. For $\bk\in \Z^d$ and $\bx\in \C^d$ we write $\bx^{\bk}:= \prod_{i=1}^d x_i^{k_i}$ assuming that $0^0:= 1$. Let also
$|\bk|= (|k_1|, \dots, |k_d|)$, $|\bk|_0$ be the number of nonzero components (sparsity) of $\bk$. The symbols $(\bx, \by)$ or $\bx\cdot \by$ mean the Euclidean scalar product in $\R^d$ or $\C^d$.
We use the notation $\log$ for the natural logarithm. By $\C^{m\times n}$ we denote the set of all $m\times n$ matrices $ \boldsymbol{\rm L}$ with complex entries, where, in what follows, $\| \boldsymbol{\rm L} \|$ or
$\| \boldsymbol{\rm L} \|_{2 \to 2}$ is their spectral norm (the largest singular value), $\boldsymbol{\rm L}^*$ is adjoint matrix to $ \boldsymbol{\rm L}$, i.e., the matrix obtained by taking transpose and then taking complex conjugate of each entry of $\boldsymbol{{\rm L}}$. For an operator $A\in \mathcal{L}(X, Y)$ between two Banach spaces we use the symbol $\| A \|$ for its operator norm, i.e.,
$$
\| A \|:= \sup\limits_{f\in X, f\neq 0} \frac{\|Af\|_Y}{\|f\|_X}.
$$
If $X, Y$ are Hilbert spaces, by $A^*$ we denote adjoint to $A$ operator $A^* \colon Y \rightarrow X$, such that for $f \in X$, $g \in Y$ the following equality for the corresponding scalar products holds: $(A f, g )_Y = ( f, A^* g)_X$. The notation $A^{-1}$ stands for inverse operator. For two sequences $(a_n)_{n=1}^{\infty},(b_n)_{n=1}^{\infty}\subset \R$  we write $a_n \lesssim b_n$ if there exists a constant $c>0$,
such that $a_n \leq cb_n$ for all $n$. We will write $a_n \asymp b_n$ if $a_n \lesssim b_n$ and $b_n \lesssim a_n$.
If the constant $c$ depends on the dimension $d$ and smoothness $s$, we indicate it by $\lesssim_{s,d}$ and $\asymp_{s,d}$.

\section{Reproducing kernel Hilbert spaces}\label{section_RKHS}
We will work in the framework of reproducing kernel Hilbert spaces. The relevant theoretical background can be found in \cite[Chapt.\ 1]{BeTh04} and \cite[Chapt.\ 4]{StChr08}.

Let $D \subset \R^d$ be an arbitrary subset and $\varrho_D$ be a measure on $D$. By $L_2(D,\varrho_D)$ denote
 the space of complex-valued square-integrable functions with respect to~$\varrho_D$, where
$$
(f,g)_{L_{2}(D, \varrho_D)} := \int_D f(\bx) \overline{g(\bx)} \dd {\varrho_{D}} (\bx),
$$
and, respectively,
$$
\| f \|_{L_{2}(D, \varrho_D)} = \left( \int_D |f(\bx)|^2 \dd {\varrho_D} (\bx) \right)^{1/2}.
$$
With $\ell_{\infty}(D)$ we denote the space of bounded on $D$ functions, for which
$$
\| f \|_{\ell_{\infty}(D)} := \sup \limits_{\bx \in D} |f(\bx)|.
$$
Note, that one do not need the measure $\varrho_D$ for this embedding. In fact, here we mean ``boundedness'' in the strong sense (in contrast to essential boundedness w.r.t. the measure $\varrho_D$).

We further consider a reproducing kernel Hilbert space $H(K)$ with a Hermitian positive definite kernel $K(\bx,\by)$ on $D \times D$. The crucial property is the identity
$$
		f(\bx) = ( f, K(\cdot,\bx) )_{H(K)}
$$
for all $\bx \in D$. It ensures that point evaluations are continuous functionals on $H(K)$. We will use the notation from \cite[Chapt.\ 4]{StChr08}. In the framework of this paper, the boundedness of the kernel $K$ is assumed, i.e.,
\begin{equation} \label{CK000}
		\|K\|_{\infty} := \sup\limits_{\bx \in D} \sqrt{K(\bx,\bx)} < \infty.
\end{equation}
 The condition (\ref{CK000}) implies that $H(K)$ is continuously embedded into $\ell_\infty(D)$, i.e.,
\begin{equation}\label{emb0}
		\|f\|_{\ell_{\infty}(D)} \leq  \|K\|_{\infty}\cdot\|f\|_{H(K)}\,.
\end{equation}
 The embedding operator
\begin{equation}\label{f00b}
\Id\colon H(K) \to L_2(D,\varrho_D)
\end{equation}
is Hilbert-Schmidt under the finite trace condition
 of the kernel
\begin{equation}\label{integrab}
	\trace{K}:=\|K\|^2_{2} = \int_{D} K(\bx,\bx)\dd\varrho_D(\bx) < \infty,
\end{equation}
 see \cite{HeBo04}, \cite[Lemma 2.3]{StSc12}, which is less strong than (\ref{CK000}).
 In view of considering in this paper a recovery of functions in the uniform norm, we assume that the kernel $K$ is bounded from now.
 We additionally assume that $H(K)$ is at least infinite dimensional.

Consider the embeddings $\Id$ from (\ref{f00b}) and $W_{\varrho_D} = \Id^* \circ \Id \colon H(K) \rightarrow  H(K)$.
Due to the compactness of $\Id$, the operator $W_{\varrho_D}$ provides an (at most) countable system
of strictly positive eigenvalues. Let $( \lambda_n)_{n=1}^{\infty}$ be a non-increasing rearrangement of these eigenvalues,
$( \sigma_n)_{n=1}^{\infty}$ be the sequence of singular numbers of $\Id$, i.e., $\sigma_k := \sqrt{\lambda_k}$, $k=1, 2, \dots$.
Further, with $(e^*_n (\bx))_{n=1}^{\infty} \subset H(K)$ denote the system of right eigenfunctions (that correspond to the ordered system $( \lambda_n)_{n=1}^{\infty}$), and
with $( \eta_n (\bx) )_{n=1}^{\infty} \subset L_2(D,\varrho_D)$ the system consisting of $\eta_k (\bx) := \sigma_k^{-1} e^*_k(\bx)$, $k=1, 2, \dots$,
which both represent orthonormal systems (ONS) in the respective spaces.

 We would like to emphasize that the embedding \eqref{f00b} is not necessarily injective.
  As discussed in \cite[p.\,127]{StChr08}, $\Id$ is in general {\em not} injective as it maps a function to an equivalence class. As a consequence, the system of eigenvectors $(e^*_n (\bx))_{n=1}^{\infty}$ may not be a basis in $H(K)$ (note that $H(K)$ may not even be separable). However, there are conditions which ensure that ONS $(e^*_n (\bx))_{n=1}^{\infty}$ is an orthonormal basis (ONB) in $H(K)$, see \cite[Chapt.~4.5]{StChr08}, which is related to Mercer's theorem, see \cite[Theorem\ 4.49]{StChr08}. Indeed, if we additionally assume that the kernel $K$ is bounded and continuous on $D \times D$ (for a compact domain $D \subset \R^d$), then $H(K)$ is separable and consists of continuous functions, see \cite[Theorems\ 16, 17]{BeTh04}. If we finally assume that the measure $\varrho_D$ is a Borel measure with full support then $(e^*_n (\bx))_{n=1}^{\infty}$ is a complete orthonormal system, i.e., ONB in $H(K)$.

In this case we have the pointwise identity (see, e.g., \cite[Cor.~4]{BeTh04})
\begin{equation}\label{ptwise}
	K(\bx,\by)=\sum\limits_{k=1}^{\infty} \overline{e^*_k(\by)}e^*_k(\bx), \quad \quad \bx,\by \in D\,.
\end{equation}

In what follows, we assume that the kernel $K$ is continuous on a compact domain $D$ (the so-called Mercer kernel). Such kernels satisfy all
  the indicated above conditions (see \cite[Theorem~4.49]{StChr08}), and we even have (\ref{ptwise})
with absolute and uniform convergence on $D \times D$.
 Taking into account the spectral theorem \cite[Chapt.~4.5]{StChr08}, we have that the system $(e^*_n (\bx))_{n=1}^{\infty}$ of eigenfunctions of the non-negative compact self-adjoint operator $W_{\varrho_D}$ is an orthonormal basis in $H(K)$. Note, that $( \eta_n (\bx) )_{n=1}^{\infty}$ is also ONB in $L_{2}(D, \varrho_D)$.

Hence, for every $f\in H(K)$  with a Mercer kernel $K$, it holds
\be\label{f_series_in_H(K)}
f(\bx) = \sum_{k=1}^{\infty} (f, e^*_k)_{H(K)} e^*_k (\bx),
\ee
where $(f, e^*_k)_{H(K)}$, $k=1, 2, \dots$, are the Fourier coefficients of $f$ with respect to the system $( e^*_n (\bx) )_{n=1}^{\infty}$. Let us further denote
$$
		P_mf := \sum\limits_{k=1}^{m} (f,e^*_k)_{H(K)}e^*_k(\cdot)
$$
the projection onto the space $\operatorname{span}\{e^*_1(\cdot),...,e^*_{m}(\cdot)\}$.
Due to the Parseval's identity, the norm of the space $H(K)$ takes the following form
\be\label{norm_H(K)_sigma}
\|f\|^2_{H(K)} =   \sum_{k=1}^{\infty} | (f, e^*_k)_{H(K)} |^2 .
\ee

Note, that for $m\in \N$ the function
$$N_{K,\varrho_D}(m,\bx) = \sum_{k=1}^{m-1} \left| \eta_k (\bx) \right|^2$$
is often called ``Christoffel function'' in literature (see \cite{Gr19} and references therein).
For $V_m = \operatorname{span}\{\eta_1(\cdot),...,\eta_{m-1}(\cdot)\}$, we define
\be\label{N(m)}
N_{K,\varrho_D}(m) := \sup_{\bx \in D}N_{K,\varrho_D}(m,\bx) =
 \sup\limits_{\substack{f \in V_m\\ f\neq 0}} \|f\|_{\infty}^2/\|f\|^2_2\,.
 \ee
We may use the quantity $N_{K,\varrho_D}(m)$ to bound the operator norm $\|\Id-P_{m-1}\|_{K,\infty}$ (We use the abbreviation $\|A\|_{K,\infty} := \|A\|_{H(K) \to \ell_{\infty}(D)}$). It holds
$$
	\sup\limits_{\|f\|_{H(K)} \leq 1}\|f-P_{m-1}f\|_{\infty} = \sup\limits_{\bx \in D}\sup\limits_{\|f\|_{H(K)}\leq 1}
	|f(x)-P_{m-1}(x)| = \sup\limits_{\bx \in D}\sup\limits_{\|f\|_{H(K)}\leq 1}
	\Big|\sum\limits_{k\geq m} (f,e_k^*(\cdot))_{H(K)}e_k^{*}(\bx)\Big|\,.
$$
This implies
$$
	\|\Id-P_{m-1}\|^2_{K,\infty} = \sup\limits_{\bx \in D}\sum\limits_{k\geq m}|e_k^*(\bx)|^2\,.
$$
Using, that $e_k^*(\bx) = \sigma_k \eta_k^*$, where the sequence $( \sigma_n)_{n=1}^{\infty}$ is non-increasing, i.e.,
for all $2^l \leq k < 2^{l+1}$ it holds $\sigma_k^2 \leq  \frac{1}{2^{l-1}} \sum\limits_{ 2^{l-1}\leq j <2^l}\sigma_j^2$,
we obtain by a straight-forward calculation
$$
\|\Id-P_{m-1}\|^2_{K,\infty} = \sup\limits_{\bx \in D} \sum_{l= \lfloor \log_2 m \rfloor}^{\infty} \sum_{2^l \leq k < 2^{l+1}} \left| \sigma_k \eta_k^* (\bx) \right|^2
\leq \sum_{l= \lfloor \log_2 m \rfloor}^{\infty}  \frac{ N_{K,\varrho_D}(2^{l+1}) }{2^{l-1}} \sum_{ 2^{l-1}\leq j <2^l}\sigma_j^2 .
$$
Hence, in view of the relations $2^{l+1} \leq 4j$ and $1/2^{l-1} < 2/j$, that are true for all $2^{l-1}\leq j <2^l$, we get
\begin{align}\label{}
\|\Id-P_{m-1}\|^2_{K,\infty} & \leq
 \sum_{l= \lfloor \log_2 m \rfloor}^{\infty} \sum_{ 2^{l-1}\leq j <2^l} \frac{2 N_{K,\varrho_D}(4j)}{j} \sigma_j^2
 \leq 2   \sum_{k= \lfloor 2^{\log_2 m-1} \rfloor }^{\infty} \frac{N_{K,\varrho_D}(4k)}{k} \sigma_k^2
 \nonumber \\
 & = 2\sum\limits_{k\geq \lfloor m/2 \rfloor}\frac{N_{K,\varrho_D}(4k)\sigma_k^2}{k}.
\label{op_norm_infty}
\end{align}

Note, that the system $( \eta_k (\bx) )_{k=1}^{\infty}$ is not necessarily a uniformly $\ell_\infty$-bounded ONS in $L_2(D,\varrho_D)$, nevertheless
the equality (\ref{ptwise}) is true.
For systems $( \eta_k (\bx) )_{k=1}^{\infty}$, where for all $k\in \N$
$$
\| \eta_k (\bx) \|_{\ell_\infty(D)} \leq B, \quad k\in \N,
$$
we have
\be\label{N-T_BOS_eta}
N_{K,\varrho_D}(m) \leq (m-1) B^2.
\ee

\section{Least squares approximation in the uniform norm}

Let further $\boldsymbol{\rm X} = \{ \boldsymbol{\rm x}^1, \dots,  \boldsymbol{\rm x}^n \}$,
$\boldsymbol{\rm x}^i = ({\rm x}_1^i, \dots, {\rm x}_d^i) \subset D$, $i=1, \dots, n$, be the drawn independently nodes according to a Borel measure $\varrho_D$ on $D$ that has full support.
In \cite{KUV19} a recovery operator $S^{m}_{\boldsymbol{\rm X}}$ was constructed which computes the best least squares fit $S^{m}_{\boldsymbol{\rm X}}f$ to the given data $\mathbf{f}(\boldsymbol{\rm X}) :=(f(\boldsymbol{\rm x}^1), \dots, f(\boldsymbol{\rm x}^n))$ from the finite-dimensional space spanned by $\eta_k (\bx)$, $k=1, \dots, m-1$.

Namely, let $\boldsymbol{\rm f}:= (f(\boldsymbol{\rm x}^1), \dots, f(\boldsymbol{\rm x}^n))^{\top}$, $\boldsymbol{\rm c} := (c_1, \dots, c_{m-1})^{\top}$,
$( \eta_k (\bx) )_{k=1}^{\infty}= ( \sigma_k^{-1} e^*_k(\bx) ))_{k=1}^{\infty}$ and
\be\label{matrix_Lm}
{\boldsymbol{\rm L}}_{n,m} := \bL_{n,m} ( \boldsymbol{\rm X} ) =
\begin{pmatrix}
\eta_1 (\boldsymbol{\rm x}^1) &  \eta_2 (\boldsymbol{\rm x}^1) & \cdots &  \eta_{m-1} (\boldsymbol{\rm x}^1) \\
\vdots  & \vdots  & \ddots & \vdots  \\
\eta_1 (\boldsymbol{\rm x}^n) &  \eta_2 (\boldsymbol{\rm x}^n) & \cdots &  \eta_{m-1} (\boldsymbol{\rm x}^n)
\end{pmatrix}.
\ee
We should solve the over-determined linear system ${\boldsymbol{\rm L}}_m \cdot \boldsymbol{\rm c} = \boldsymbol{\rm f}$
via least squares (e.g.\ directly or via the LSQR algorithm \cite{PaSa82}),
 i.e., compute
  	$$
  	\boldsymbol{\rm c} := (\bL_m^{\ast}\bL_m)^{-1} \, \bL_m^{\ast}\cdot \mathbf{f}.
  	$$
The output $\boldsymbol{\rm c} \in \C^{m-1}$ gives the coefficients of the approximant
\be\label{algorithm_Sm}
  S^{m}_{\boldsymbol{\rm X}}f:= \sum_{k=1}^{m-1} c_k \eta_k.
\ee

Further, let us consider a weighted least squares algorithm (see below) using the following weight/density function, which was introduced by D. Krieg and M. Ullrich in \cite{KrUl19}. We set
\be\label{density_f}
\varrho_m (\bx) = \frac{1}{2}\Big(\frac{1}{m-1}\sum_{k=1}^{m-1} \left| \eta_k (\bx) \right|^2 + \frac{1}{\sum_{k=m}^{\infty} \lambda_k}\sum\limits_{k=m}^{\infty} |e_k^{\ast}(\bx)|^2\Big).
\ee
This density first appeared in \cite{KrUl19}. We will also use the below algorithm with the simpler density function
\begin{equation}\label{sd}
	\varrho'_m(\bx) = \frac{1}{2(m-1)}\sum\limits_{k=1}^{m-1}|\eta_k(\bx)|^2 + \frac{1}{2}\,,
\end{equation}
in case $\varrho_D$ is a probability measure (see Remark \ref{remark_other_density}). This density function has been implicitly used by V.N. Temlyakov in \cite{Teml2020} and by Cohen, Migliorati \cite{CoMi16}. In many relevant cases both densities yield similar bounds. However, as we point out in Remark \ref{rem_bern} it could make a big difference even in the univariate situation.

\begin{algorithm}[H]
\caption{Weighted least squares regression \cite{KrUl19}, \cite{CoMi16}.}\label{algo1:reweighted}
  \begin{tabular}{p{1.2cm}p{4.5cm}p{8.9cm}}
    Input: & $\bX = (\bx^1,...,\bx^n)\in D^n$ \hfill & set of distinct sampling nodes, \\
      & $\mathbf{f} = (f(\bx^1),...,f(\bx^n))^\top$ \hfill & samples of $f$ evaluated at the nodes from $\bX$, \\
      & $m\in\N$ & $m < n$ such that the matrix $\widetilde{\bL}_{k,m}$ in \eqref{eq:tilde_L} has full (column) rank.
  \end{tabular}
  \begin{algorithmic}
	\STATE
      Compute reweighted samples $\boldsymbol{g}:=(g_j)_{j=1}^n$ with $g_j:=\begin{cases}0,  & \varrho_m(\bx^j)=0,\\
      f(\bx^j)/\sqrt{\varrho_m(\bx^j)}, & \varrho_m(\bx^j)\neq 0\,.
       \end{cases}$

  	\STATE
  	Solve the over-determined linear system
  	\begin{equation}
  	\widetilde{\bL}_{k,m} \cdot (\tilde{c}_1,...,\tilde{c}_{m-1})^\top = \mathbf{g}\,, \; \widetilde{\bL}_{k,m}:=\Big(l_{j,k}\Big)_{j=1,k=1}^{n,m-1},\; l_{j,k}:=\begin{cases}0,  & \varrho_m(\bx^j)=0,\\
  	      \eta_k(\bx^j)/\sqrt{\varrho_m(\bx^j)}, & \varrho_m(\bx^j)\neq 0,
  	       \end{cases}
  	       \label{eq:tilde_L}
  	\end{equation}
  	via least squares (e.g.\ directly or via the LSQR algorithm \cite{PaSa82}), i.e., compute
  	$$
  	(\tilde{c}_1,...,\tilde{c}_{m-1})^\top := (\widetilde{\bL}_{k,m}^{\ast}\widetilde{\bL}_{k,m})^{-1} \,\widetilde{\bL}_{k,m}^{\ast}\cdot \mathbf{g}.
  	$$
  \end{algorithmic}
   Output:  $\mathbf{\tilde{c}} = (\tilde{c}_1,...,\tilde{c}_{m-1})^\top\in \C^{m-1}$ coefficients of the approximant $\widetilde{S}_{\bX}^m f:=\sum_{k = 1}^{m-1} \tilde{c}_k \eta_k$.
  \end{algorithm}

\begin{theorem}\label{prob_l_infty_general}
	Let $H(K)$ be a reproducing kernel Hilbert space of complex-valued functions defined on a compact domain $D \subset \R^d$ with a continuous and bounded kernel $K(\bx, \by)$, $K\colon D\times D \rightarrow \C$, i.e., $\|K\|_{\infty} := \sup_{\bx \in D} \sqrt{K(\bx,\bx)} < \infty$, and $\varrho_D$ be a finite Borel measure with full support on $D$.
We denote with $( \sigma_n )_{n=1}^{\infty}$ the non-increasing sequence of singular numbers of the embedding
 $\Id \colon H(K) \rightarrow L_{2}(D, \varrho_D)$. The points $\bX = (\bx^1,...,\bx^n)$ are drawn i.i.d. with respect to the measure defined by the density $\varrho_m(\cdot)d\varrho_D$. There are absolute constants $c_1,c_2,c_3>0$ such that for
 $$
	m:= \Big\lfloor \frac{n}{c_1r\log n}\Big\rfloor
 $$
with $r>1$, the reconstruction operator $\widetilde{S}_{\bX}^m$ (see Algorithm~\ref{algo1:reweighted}), satisfies
\be\label{prob_l_infty_first_gen}
\begin{split}
\Prob &\Big(\sup\limits_{\|f\|_{H(K)}\leq 1}\|f-  \widetilde{S}_{\bX}^m f\|^2_{\ell_{\infty}(D)}\leq c_3 \max\Big\{\frac{N_{K,\varrho_D}(m)}{m}\sum\limits_{k\geq \lfloor m/2 \rfloor}\sigma_k^2,
\sum\limits_{k\geq \lfloor m/2 \rfloor}\frac{N_{K,\varrho_D}(4k)\sigma_k^2}{k}\Big\}\\
&\geq 1- c_2n^{1-r}\,.
\end{split}
\ee

\end{theorem}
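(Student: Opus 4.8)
The plan is to split the error into a deterministic \emph{truncation} part and a stochastic \emph{variance} part, and to control the latter through two concentration statements: a matrix Chernoff bound guaranteeing that the reweighted least squares matrix is well conditioned, and a separate bound on the operator norm of the tail sampling matrix. Throughout I work with the projection $P_{m-1}$ onto $V_m = \spn\{\eta_1,\dots,\eta_{m-1}\} = \spn\{e_1^*,\dots,e_{m-1}^*\}$ and set $g := f - P_{m-1}f = \sum_{k\ge m}(f,e_k^*)_{H(K)}\,\sigma_k\eta_k$. Since $\widetilde S_{\bX}^m$ is a weighted least squares projection onto $V_m$, on the event that $\widetilde{\bL}_{n,m}$ has full column rank it reproduces every element of $V_m$, so $\widetilde S_{\bX}^m P_{m-1}f = P_{m-1}f$ and hence $f - \widetilde S_{\bX}^m f = g - \widetilde S_{\bX}^m g$. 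The triangle inequality then gives $\|f-\widetilde S_{\bX}^m f\|_{\ell_\infty(D)} \le \|g\|_{\ell_\infty(D)} + \|\widetilde S_{\bX}^m g\|_{\ell_\infty(D)}$, and I bound the two summands separately. The first summand is deterministic: $\|g\|_{\ell_\infty(D)}\le \|\Id-P_{m-1}\|_{K,\infty}\|f\|_{H(K)}$, and \eqref{op_norm_infty} already gives $\|\Id-P_{m-1}\|_{K,\infty}^2 \le 2\sum_{k\ge\lfloor m/2\rfloor} N_{K,\varrho_D}(4k)\sigma_k^2/k$, which is exactly the second entry of the maximum.

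For the variance summand I first fix the high-probability event $\mathcal M$ on which the Hermitian matrix $\tfrac1n\widetilde{\bL}_{n,m}^*\widetilde{\bL}_{n,m}$ has all eigenvalues in $[\tfrac12,\tfrac32]$. The rows of $\widetilde{\bL}_{n,m}$ are the vectors $(\eta_k(\bx^j)/\sqrt{\varrho_m(\bx^j)})_{k=1}^{m-1}$, and the definition \eqref{density_f} of $\varrho_m$ yields the pointwise lower bound $\varrho_m(\bx)\ge \tfrac{1}{2(m-1)}\sum_{k=1}^{m-1}|\eta_k(\bx)|^2$, whence $\sum_{k=1}^{m-1}|\eta_k(\bx)|^2/\varrho_m(\bx)\le 2(m-1)$ uniformly in $\bx$. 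Since each summand $\tfrac1n(\eta_k(\bx^j)\overline{\eta_l(\bx^j)}/\varrho_m(\bx^j))_{k,l}$ is a positive semidefinite rank-one matrix of operator norm at most $2(m-1)/n$ with expectation $\tfrac1n\Id_{m-1}$ (the $\eta_k$ being orthonormal in $L_2(D,\varrho_D)$), the matrix Chernoff inequality applies; choosing the implicit constant $c_1$ in $m=\lfloor n/(c_1 r\log n)\rfloor$ large enough makes $\Prob(\mathcal M)\ge 1-c\,n^{1-r}$. On $\mathcal M$ the pseudo-inverse satisfies $\|\widetilde{\bL}_{n,m}^+\|_{2\to2}\le\sqrt{2/n}$.

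On $\mathcal M$ I estimate the variance summand by passing from $\ell_\infty(D)$ to $L_2(D,\varrho_D)$ via the Nikolskii-type inequality built into \eqref{N(m)}: since $\widetilde S_{\bX}^m g\in V_m$, one has $\|\widetilde S_{\bX}^m g\|_{\ell_\infty(D)}^2\le N_{K,\varrho_D}(m)\,\|\widetilde S_{\bX}^m g\|_{L_2(D,\varrho_D)}^2 = N_{K,\varrho_D}(m)\,\|\widetilde{\bL}_{n,m}^+\widetilde{\boldsymbol g}\|_2^2$, where $\widetilde{\boldsymbol g}=(g(\bx^j)/\sqrt{\varrho_m(\bx^j)})_j$ is the reweighted sample vector of $g$ and I used $L_2$-orthonormality of the $\eta_k$. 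Stability on $\mathcal M$ bounds this by $\tfrac2n N_{K,\varrho_D}(m)\|\widetilde{\boldsymbol g}\|_2^2$, and taking the supremum over $\|f\|_{H(K)}\le1$ turns $\tfrac1n\|\widetilde{\boldsymbol g}\|_2^2$ into $\tfrac1n\|\bH\|_{2\to2}^2$, where $\bH=(\sigma_k\eta_k(\bx^j)/\sqrt{\varrho_m(\bx^j)})_{j,\,k\ge m}$ is the tail sampling matrix. It then remains to show $\tfrac1n\|\bH\|_{2\to2}^2\lesssim \tfrac1m\sum_{k\ge\lfloor m/2\rfloor}\sigma_k^2$ with high probability, which together with the factor $N_{K,\varrho_D}(m)$ produces the first entry of the maximum. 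Here the design of $\varrho_m$ enters decisively a second time: substituting $e_k^*=\sigma_k\eta_k$ into the second half of \eqref{density_f} gives $\sum_{k\ge m}\sigma_k^2|\eta_k(\bx)|^2\le 2\varrho_m(\bx)\sum_{k\ge m}\sigma_k^2$, so every rank-one summand of $\tfrac1n\bH^*\bH$ has operator norm at most $L:=\tfrac2n\sum_{k\ge m}\sigma_k^2$, while $\Ex[\tfrac1n\bH^*\bH]=\diag(\sigma_k^2)_{k\ge m}$ has norm $\sigma_m^2$.

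The final ingredient is a concentration bound for $\|\tfrac1n\bH^*\bH\|_{2\to2}$ around its mean $\sigma_m^2$. Because the tail block is infinite-dimensional, I would invoke the intrinsic-dimension version of the matrix Chernoff/Bernstein inequality, whose intrinsic dimension $\sum_{k\ge m}\sigma_k^2/\sigma_m^2$ is finite; with the per-sample bound $L$ and the calibration $n\asymp m\log n$, the deviation is of order $L\log n + \sqrt{L\,\sigma_m^2\log n}\lesssim \tfrac1m\sum_{k\ge m}\sigma_k^2 + \sigma_m^2$ on an event of probability $\ge 1-c\,n^{1-r}$. Using the elementary monotonicity estimate $\sigma_m^2\le \tfrac2m\sum_{k\ge\lfloor m/2\rfloor}\sigma_k^2$ then yields $\tfrac1n\|\bH\|_{2\to2}^2\lesssim \tfrac1m\sum_{k\ge\lfloor m/2\rfloor}\sigma_k^2$. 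A union bound over this event and $\mathcal M$ gives the probability $1-c_2 n^{1-r}$, and collecting the two summands produces the maximum in \eqref{prob_l_infty_first_gen} with an absolute constant $c_3$. I expect the main obstacle to be precisely this last step: making the infinite-dimensional tail concentration rigorous (controlling both the operator norm and the intrinsic dimension) and tracking the interplay of $n\asymp m\log n$ carefully enough that the extra logarithm is absorbed into the factor $1/m$ rather than surviving in the final bound.
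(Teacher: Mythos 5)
Your proposal is correct and follows essentially the same route as the paper's own proof: the same split into $\|f-P_{m-1}f\|_{\ell_\infty}$ (bounded by \eqref{op_norm_infty}) plus $\|\widetilde{S}_{\bX}^m(f-P_{m-1}f)\|_{\ell_\infty}$, the same passage through $N_{K,\varrho_D}(m)$ and the pseudo-inverse bound $\sqrt{2/n}$ (the paper cites \cite[Theorem~5.1]{Ni_Sc_Ut20} for your matrix-Chernoff event), and the same infinite tail matrix $\widetilde{\mathbf{\Phi}}$ with row-norm bound $2\sum_{k\ge m}\lambda_k$ controlled by a matrix concentration inequality (the paper uses \cite[Prop.~3.8]{MoUll20}, which is exactly the Bernstein-type tail bound you anticipate needing). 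The final step you flag as the main obstacle is thus handled by an off-the-shelf result, and the remaining bookkeeping ($\sigma_m^2\le \tfrac{2}{m}\sum_{k\ge\lfloor m/2\rfloor}\sigma_k^2$, absorbing $r\log n/n\le 1/(c_1 m)$) matches the paper.
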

\begin{proof} Let $f \in H(K)$ such that $\|f\|_{H(K)}\leq 1$.  For the reconstruction operator $\widetilde{S}_{\bX}^m f$ from Algorithm~\ref{algo1:reweighted} it holds
 \be\label{infty_general}
\| f-  \widetilde{S}_{\bX}^m f \|_{\ell_{\infty}(D)}  \leq
\| f- P_{m-1}f \|_{\ell_{\infty}(D)} +
\|P_{m-1}f - \widetilde{S}_{\bX}^m f \|_{\ell_{\infty}(D)} ,
\ee
where the projection $P_{m}f$ is defined in Section~\ref{section_RKHS}.

From (\ref{op_norm_infty}) we obtain that
\be\label{sup_f-Pm}
\sup\limits_{\|f\|_{H(K)}\leq 1} \| f- P_{m-1}f \|_{\ell_{\infty}(D)} \leq
\sqrt{ 2 \sum\limits_{k\geq \lfloor m/2 \rfloor}\frac{N_{K,\varrho_D}(4k)\sigma_k^2}{k}}.
\ee

Clearly, $\widetilde{S}^{m}_{\boldsymbol{\rm X} } (P_{m-1}f) = P_{m-1}f$.
Therefore, for the second summand on the right-hand side of (\ref{infty_general})
we can write
\begin{align}
\|P_{m-1}f - \widetilde{S}_{\bX}^m f \|_{\ell_{\infty}(D)}  & =
\|\widetilde{S}_{\bX}^m  \left(f - P_{m-1}f \right) \|_{\ell_{\infty}(D)}
= \left\| \sum_{k=1}^{m-1}  \tilde{c}_k  \eta_k (\bx) \right\|_{\ell_{\infty}(D)}
\nonumber \\
& \leq  \sqrt{N_{K,\varrho_D}(m)} \left( \sum_{k=1}^{m-1} | \tilde{c}_k |^2 \right)^{1/2}.
\label{second_term_linfty}
\end{align}

Let us estimate the quantity $\left( \sum_{k=1}^{m-1} | \tilde{c}_k |^2 \right)^{1/2}$, i.e.,  $\ell_2$-norm of the coefficients of
the operator $\widetilde{S}_{\bX}^m$. We have
\begin{align}
\left( \sum_{k=1}^{m-1} | \tilde{c}_k |^2 \right)^{1/2} & =
\left\| (\widetilde{\bL}_{k,m}^{\ast}\widetilde{\bL}_{k,m})^{-1} \,\widetilde{\bL}_{k,m}^{\ast}
\left(
  \begin{array}{c}
    \frac{\left(f - P_{m-1}f \right) ( \bx^1 )}{ \sqrt{\varrho_m(\bx^1)} } \\
        \cdots \\
        \frac{\left(f - P_{m-1}f \right) ( \bx^n )}{ \sqrt{\varrho_m(\bx^n)} } \\
                \end{array}
 \right)
 \right\|_{\ell_2}
\nonumber \\
 & \leq \left\| (\widetilde{\bL}_{n,m}^{\ast}\widetilde{\bL}_{n,m})^{-1} \,\widetilde{\bL}_{n,m}^{\ast} \right\|_{2 \to 2}
\left( \sum_{i=1}^{n} \frac{ | f(\bx^i) - P_{m-1}f (\bx^i) |^2  }{ \varrho_m(\bx^i) } \right)^{1/2}.
 \label{estim_ck_l2}
\end{align}
Now we estimate the spectral norm. For the function
$$
\widetilde{N} (m) := \sup_{\bx \in D} \sum_{k=1}^{m-1} \frac{ \left| \eta_k (\bx) \right|^2}{ \varrho_m (\bx) }\,,
$$
by the choice (\ref{density_f}) of $\varrho_m$, it holds
$\widetilde{N} (m)  \leq 2(m-1)$.
Hence, in view of \cite[Theorem~5.1]{Ni_Sc_Ut20},  we get with
$$
\widetilde{N} (m) \leq 2(m-1) \leq \frac{n}{10r\log n }
$$
for $r>1$ the  condition
\be\label{condition_m}
m \leq \frac{n}{c_1 r\log n }\,,
\ee
and have
\be\label{spectral_norm}
\left\| (\widetilde{\bL}_{n,m}^{\ast}\widetilde{\bL}_{n,m})^{-1} \,\widetilde{\bL}_{n,m}^{\ast} \right\|_{2 \to 2} \leq \sqrt{ \frac{2}{n} }
\ee
with probability at least $1-3n^{1-r}$.
Combining  (\ref{estim_ck_l2}) and (\ref{spectral_norm}),  and taking into account (\ref{f_series_in_H(K)}), we derive to
\begin{align}
\sum_{k=1}^{m-1} | \tilde{c}_k |^2  & \leq \frac{2}{n}  \sum_{i=1}^{n} \frac{ | f(\bx^i) - P_{m-1}f (\bx^i) |^2  }{ \varrho_m(\bx^i) }
\nonumber \\
& = \frac{2}{n}  \sum_{i=1}^{n} \left| \sum_{k=m}^{\infty} (f, e^*_k)_{H(K)} \frac{e^*_k (\bx^i)}{ \sqrt{\varrho_m(\bx^i)} } \right|^2
= \frac{2}{n} (\widetilde{\mathbf{\Phi}} \mathbf{\hat{f}}, \widetilde{\mathbf{\Phi}} \mathbf{\hat{f}})_{\ell_2},
\label{sum_c_tilde_1}
\end{align}
where
$$
\widetilde{\mathbf{\Phi}} := \left( \frac{e^*_k (\bx^i)}{ \sqrt{\varrho_m(\bx^i)} } \right)_{i=1, k=m}^{n, \infty} = \left(
                                                                                                                       \begin{array}{c}
                                                                                                                         \by^1 \\
                                                                                                                         \vdots \\
                                                                                                                         \by^n \\
                                                                                                                       \end{array}
                                                                                                                     \right)
$$
is an infinite matrix with $\by^i := 1/\sqrt{\varrho_m(\bx^i)} (e^*_m (\bx^i), e^*_{m+1} (\bx^i), \dots )$, $i=1,\dots, n$,
$\mathbf{\hat{f}} := ( (f, e^*_k)_{H(K)}  )_{k=m}^{\infty}$.
Then, (\ref{sum_c_tilde_1}) yields
\begin{align}
\sum_{k=1}^{m-1} | \tilde{c}_k |^2  & \leq
\frac{2}{n} (\widetilde{\mathbf{\Phi}}^* \widetilde{\mathbf{\Phi}} \mathbf{\hat{f}},  \mathbf{\hat{f}})_{\ell_2}
\leq \frac{2}{n} \left\|  \widetilde{\mathbf{\Phi}}^* \widetilde{\mathbf{\Phi}} \right\|_{2 \to 2}  ( \mathbf{\hat{f}},  \mathbf{\hat{f}})_{\ell_2}
\leq \frac{2}{n} \left\|  \widetilde{\mathbf{\Phi}}^* \widetilde{\mathbf{\Phi}} \right\|_{2 \to 2} \| f \|^2_{H(K)}
\nonumber \\
& \leq 2 \left( \left\|  \frac{1}{n}\widetilde{\mathbf{\Phi}}^* \widetilde{\mathbf{\Phi}} - \mathbf{\Lambda} \right\|_{2 \to 2} +
\|\mathbf{\Lambda}  \|_{2 \to 2} \right)
\label{sum_c_tilde_2}
\end{align}
with $\mathbf{\Lambda}  := \diag (\sigma_m^2, \sigma_{m+1}^2, \dots)$.

We can use now \cite[Prop.\ 3.8]{MoUll20}, in view of the fact that
$$
\| \by^i \|^2_2 \leq  \sup_{\bx \in D} \sum_{k=m}^{\infty} \frac{\left|e^*_k(\bx)\right|^2}{\varrho_m(\bx)}
\leq 2 \sum\limits_{k=m}^{\infty}\lambda_k  =: M^2.
$$

We get with high probability that
$$
\left\|  \frac{1}{n}\widetilde{\mathbf{\Phi}}^* \widetilde{\mathbf{\Phi}} - \mathbf{\Lambda} \right\|_{2 \to 2}
\leq \max \left\{ \frac{8 r \log n}{n} M^2 \kappa^2, \| \mathbf{\Lambda} \|_{2 \to 2} \right\}:= F,
$$
where $\kappa=\frac{1+\sqrt{5}}{2}$, $\| \mathbf{\Lambda} \|_{2 \to 2} = \sigma_m^2$.
Therefore, from (\ref{sum_c_tilde_2}) we get
\begin{align}
 \sum_{k=1}^{m-1} | \tilde{c}_k |^2  & \leq 2(F + \sigma_m^2)
= 2 \left( \max \left\{ \frac{16 \kappa^2 r \log n}{n}  \sum\limits_{k=m}^{\infty}\lambda_k , \sigma_m^2 \right\} + \sigma_m^2 \right)
\nonumber \\
& \leq  4 \max \left\{ \frac{16 \kappa^2 r \log n}{n}  \sum\limits_{k=m}^{\infty}\lambda_k , \sigma_m^2 \right\}.
 \label{sum_c_tilde_final}
\end{align}
Choosing the maximal $m$ that satisfy the relation (\ref{condition_m}), i.e., $m= \big\lfloor \frac{n}{c_1 r\log n}\big\rfloor \,,$
we get
$$
\frac{1}{m} = \frac{1}{\big\lfloor \frac{n}{c_1 r\log n}\big\rfloor } \geq
\frac{1}{ \frac{n}{c_1 r\log n} } = \frac{c_1 r\log n}{n}\, ,
$$
that is
$$
 \frac{r \log n}{n} \leq \frac{1}{c_1 m}\,,
$$
and hence,
\begin{align}
\|P_{m-1}f - \widetilde{S}_{\bX}^m f \|_{\ell_{\infty}(D)} & \leq 2  \sqrt{N_{K,\varrho_D}(m)}
 \max \left\{ \frac{4 \kappa}{\sqrt{c_1}} \sqrt{\frac{\sum_{k=m}^{\infty}\sigma_k^2}{m}}, \sigma_m \right\}
 \nonumber
 \\
 & \leq C'  \sqrt{ \frac{N_{K,\varrho_D}(m)}{m} \sum\limits_{k\geq \lfloor m/2 \rfloor}\sigma_k^2} \,.
 \label{sup_Pm-Smf}
\end{align}

Finally, from (\ref{infty_general}), (\ref{sup_f-Pm}), and (\ref{sup_Pm-Smf}) we obtain
\be\label{Th1_constants_last_line}
\sup\limits_{\|f\|_{H(K)}\leq 1}\|f-  \widetilde{S}_{\bX}^m f\|_{\ell_{\infty}(D)} \leq
\sqrt{ 2 \sum\limits_{k\geq\lfloor m/2 \rfloor} \frac{N_{K,\varrho_D}(4k)\sigma_k^2}{k}} +
C'  \sqrt{ \frac{N_{K,\varrho_D}(m)}{m} \sum\limits_{k\geq \lfloor m/2 \rfloor}\sigma_k^2}\,,
\ee
which yields the statement of Theorem~\ref{prob_l_infty_general}.
\end{proof}

\begin{remark}\label{remark_constants}
Note, that we can get explicit values for the constants $c_1, c_2, c_3$ in Theorem~\ref{prob_l_infty_general}. Indeed, following the proof we see, that  $c_2 =3$.
As to $c_1, c_3$, the relation (\ref{sup_Pm-Smf}) yields
$$
 \|P_{m-1}f - \widetilde{S}_{\bX}^m f \|_{\ell_{\infty}(D)} \leq
2\left( \frac{4 \kappa}{\sqrt{c_1}} +1 \right) \sqrt{N_{K,\varrho_D}(m)} \max \left\{ \sqrt{\frac{\sum_{k=m}^{\infty}\sigma_k^2}{m}}, \sigma_m \right\}.
$$
Then, taking into account (see Remark~2 on p. 16 of \cite{Ni_Sc_Ut20}) that
$$
\sigma_m^2 +  \frac{1}{m} \sum_{k=m}^{\infty}\sigma_k^2 \leq \frac{2}{m} \sum_{k\geq\lfloor m/2 \rfloor}\sigma_k^2,
$$
we get
$$
 \|P_{m-1}f - \widetilde{S}_{\bX}^m f \|_{\ell_{\infty}(D)} \leq
2\sqrt{2} \left( \frac{4 \kappa}{\sqrt{c_1}} +1 \right) \sqrt{\frac{N_{K,\varrho_D}(m)}{m} \sum_{k\geq\lfloor m/2 \rfloor}\sigma_k^2 },
$$
where $\kappa=\frac{1+\sqrt{5}}{2}$. Respectively, we put $C' = 2\sqrt{2} \left( \frac{4 \kappa}{\sqrt{c_1}} +1 \right)$ in (\ref{sup_Pm-Smf}), and from (\ref{Th1_constants_last_line}) get
\begin{align*}
\sup\limits_{\|f\|_{H(K)}\leq 1}\|f & -  \widetilde{S}_{\bX}^m f\|_{\ell_{\infty}(D)} \leq
(\sqrt{2} + C')
 \left( \sqrt{\sum\limits_{k\geq\lfloor m/2 \rfloor} \frac{N_{K,\varrho_D}(4k)\sigma_k^2}{k}} +
  \sqrt{ \frac{N_{K,\varrho_D}(m)}{m} \sum\limits_{k\geq \lfloor m/2 \rfloor}\sigma_k^2} \right)
\\
&\leq  2 \sqrt{2}\left( \frac{8 \kappa}{\sqrt{c_1}} +3 \right)
  \max\Big\{ \sqrt{\sum\limits_{k\geq\lfloor m/2 \rfloor} \frac{N_{K,\varrho_D}(4k)\sigma_k^2}{k}},
\sqrt{ \frac{N_{K,\varrho_D}(m)}{m} \sum\limits_{k\geq \lfloor m/2 \rfloor}\sigma_k^2}\Big\}.
\end{align*}
Hence,
$c_3 = 8 \left( \frac{8 \kappa}{\sqrt{c_1}} + 3 \right)^2$ in the general estimate (\ref{prob_l_infty_first_gen}).

For arbitrary orthonormal systems $( \eta_k (\bx) )_{k=1}^{\infty}$,
we can take $c_1 = 20$ in (\ref{condition_m}) and obtain then $c_3 = 278$.
If $( \eta_k (\bx) )_{k=1}^{\infty}$ is uniformly $\ell_\infty$-bounded ONS in $L_2(D,\varrho_D)$,
then we can use the plain (non-weighted) least squares algorithm and take $c_1 = 10$, hence,
put $c_3 = 403$ (see Section~\ref{section_examples} for further comments).
 \end{remark}

\begin{corollary}\label{cor} If there is a measure $\varrho_D$ on $D$ such that $N_{K,\varrho_D}(k) = \mathcal{O}(k)$ we obtain \be
\begin{split}
 \sup\limits_{\|f\|_{H(K)}\leq 1}\| f-  \widetilde{S}_{\bX}^mf \|^2_{\ell_{\infty}(D)}\leq C_{\varrho_D,K} \sum\limits_{k\geq \lfloor m/2 \rfloor} \sigma_k^2 \leq C_{\varrho_D,K}
 a_{\lfloor m/2 \rfloor}(\Id_{K,\infty})^2
 \end{split}
\ee
for constant $C_{K,\varrho_D}>0$ depending on the kernel $K$ and the measure $\varrho_D$.
\end{corollary}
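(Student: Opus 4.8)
The plan is to combine the high-probability estimate of Theorem~\ref{prob_l_infty_general} with a matching \emph{lower} bound for the approximation numbers $a_n(\Id\colon H(K)\to\ell_\infty(D))$. Write the hypothesis $N_{K,\varrho_D}(k)=\mathcal{O}(k)$ as $N_{K,\varrho_D}(k)\le c\,k$ and set $n:=\lfloor m/2\rfloor$. The left inequality is then immediate, and the only genuine work lies in the right one, which is a general, measure-dependent lower estimate for uniform-norm recovery.

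For the left inequality I would simply insert the growth bound into the two terms of the maximum in \eqref{prob_l_infty_first_gen}: one has $\tfrac{N_{K,\varrho_D}(m)}{m}\sum_{k\ge n}\sigma_k^2\le c\sum_{k\ge n}\sigma_k^2$ and $\sum_{k\ge n}\tfrac{N_{K,\varrho_D}(4k)}{k}\sigma_k^2\le 4c\sum_{k\ge n}\sigma_k^2$, since $N_{K,\varrho_D}(4k)\le 4c\,k$. Both terms thus collapse to a constant multiple of $\sum_{k\ge n}\sigma_k^2$, so on the event of probability at least $1-c_2n^{1-r}$ from Theorem~\ref{prob_l_infty_general} the squared error is bounded by $C_{\varrho_D,K}\sum_{k\ge n}\sigma_k^2$ with $C_{\varrho_D,K}:=4c\,c_3$.

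For the right inequality I would establish the deterministic bound $\sum_{k\ge n}\sigma_k^2\le a_n(\Id\colon H(K)\to\ell_\infty(D))^2$, where $\varrho_D$ may be normalised to a probability measure (its total mass only rescales the $\sigma_k$ and is absorbed into $C_{\varrho_D,K}$). Fix a bounded operator $A$ of rank $<n$. By the Riesz representation theorem I can write $(Af)(\bx)=(f,h_{\bx})_{H(K)}$, where $h_{\bx}$ runs over a fixed subspace $G\subset H(K)$ with $\dim G\le n-1$; together with the reproducing property $f(\bx)=(f,K(\cdot,\bx))_{H(K)}$ this gives $(f-Af)(\bx)=(f,K(\cdot,\bx)-h_{\bx})_{H(K)}$. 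The crucial observation is that, for fixed $\bx$, the inner supremum over $f$ equals $\|K(\cdot,\bx)-h_{\bx}\|_{H(K)}$, and since the two suprema over $f$ and $\bx$ are both genuine suprema they commute, whence
\[
\sup_{\|f\|_{H(K)}\le1}\|f-Af\|_{\ell_\infty(D)}=\sup_{\bx\in D}\|K(\cdot,\bx)-h_{\bx}\|_{H(K)}.
\]
Bounding $h_{\bx}\in G$ from below by the orthogonal projection $P_G$ and then replacing a supremum by a $\varrho_D$-average yields $\sup_{\bx}\|K(\cdot,\bx)-h_{\bx}\|_{H(K)}^2\ge\int_D\|(\Id-P_G)K(\cdot,\bx)\|_{H(K)}^2\,\dd\varrho_D(\bx)$.

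Finally I would evaluate this integral. With an orthonormal basis $u_1,\dots,u_{\dim G}$ of $G$ one has $\|(\Id-P_G)K(\cdot,\bx)\|_{H(K)}^2=K(\bx,\bx)-\sum_j|u_j(\bx)|^2$, so integration against $\varrho_D$ gives $\trace{K}-\sum_j\|u_j\|_{L_2(D,\varrho_D)}^2=\sum_k\sigma_k^2-\sum_j(W_{\varrho_D}u_j,u_j)_{H(K)}$. As the $u_j$ are orthonormal in $H(K)$ and $W_{\varrho_D}$ has eigenvalues $\sigma_k^2$, the Ky~Fan / Poincar\'e separation inequality bounds $\sum_j(W_{\varrho_D}u_j,u_j)_{H(K)}\le\sum_{k=1}^{n-1}\sigma_k^2$, leaving exactly the tail $\sum_{k\ge n}\sigma_k^2$; taking the infimum over all rank-$(<n)$ operators $A$ finishes the estimate. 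The main obstacle is precisely this lower bound: one must justify the interchange of suprema (legitimate here), track the normalisation of $\varrho_D$ carefully, and invoke the eigenvalue-sum inequality for the quadratic form of $W_{\varrho_D}$ over an orthonormal system of size at most $n-1$.
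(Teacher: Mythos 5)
Your proposal is correct, and at the top level it follows the same route as the paper: plug the assumption $N_{K,\varrho_D}(k)=\mathcal{O}(k)$ into both terms of the maximum in \eqref{prob_l_infty_first_gen} to get the first inequality, then pass from the tail sum $\sum_{k\geq n}\sigma_k^2$ to $a_n(\Id_{K,\infty})^2$ for the second. The difference is that the paper simply cites \cite[Lem.\ 3.3]{CoKuSi16} (recorded as \eqref{cobos}, with the explicit constant $\varrho_D(D)$) for that second step, whereas you supply a full proof of it: Riesz representation of a rank-$(<n)$ operator as $(Af)(\bx)=(f,h_{\bx})_{H(K)}$ with $h_{\bx}$ ranging in a fixed subspace $G$ of dimension at most $n-1$, interchange of the two suprema, the projection bound $\|K(\cdot,\bx)-h_{\bx}\|_{H(K)}\geq\|(\Id-P_G)K(\cdot,\bx)\|_{H(K)}$, averaging against $\varrho_D$, the identity $\int_D K(\bx,\bx)\,\dd\varrho_D=\trace{K}=\sum_k\sigma_k^2$ (valid under the standing Mercer assumptions), and the Ky Fan inequality $\sum_j(W_{\varrho_D}u_j,u_j)_{H(K)}\leq\sum_{k=1}^{n-1}\sigma_k^2$. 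Each of these steps checks out, and this is essentially the argument behind the cited lemma (versions of which the paper attributes also to Osipenko--Parfenov and Kunsch). What your version buys is self-containedness and an explicit view of where the constant $\varrho_D(D)$ enters; what it costs is length, and you should state explicitly (as the corollary implicitly does, inheriting from Theorem~\ref{prob_l_infty_general}) that the first inequality holds only on the high-probability event, not deterministically.
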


\begin{proof} The statement follows directly from Theorem \ref{prob_l_infty_general} in combination \cite[Lem.\ 3.3]{CoKuSi16}.

\end{proof}

\begin{remark}[Other density]\label{remark_other_density}
Instead of \eqref{density_f} one could also use the simpler density function \eqref{sd} defined above
in case that $\varrho_D$ is a probability measure on $D$ (otherwise with a proper normalization). Then the estimate would be slightly worse. Instead of the term
$$
\frac{N_{K,\varrho_D}(m)}{m}\sum\limits_{k\geq \lfloor m/2 \rfloor}\sigma_k^2
$$
in (\ref{prob_l_infty_first_gen}) we obtain
$$
\frac{N_{K,\varrho_D}(m)}{m}\sum\limits_{k\geq  \lfloor cm \rfloor}\frac{N_{K,\varrho_D}(4k)\sigma_k^2}{k}\, ,
$$
where $c>0$ is an absolute constant. This does not make a difference if $N_{K,\varrho_D}(m) = \mathcal{O}(m)$. However, it makes a difference if we have for instance $N_{K,\varrho_D}(m)$ growing as $m^2$ which is the case for, e.g., Legendre polynomials. See Example~\ref{example_Bernardi} below.
\end{remark}

\begin{remark}[Non-weighted version]\label{remark_Non-weighted}
Note, that one could even use a non-weighted least squares algorithm
(see (\ref{algorithm_Sm})), without considering the additional density function $\varrho_m$.
If the measure $\varrho_D$ is fixed, namely the one where the data is coming from, then we
get a bound
$$
  \sup\limits_{\|f\|_{H(K)}\leq 1} \| f - S_{\bX}^m f\|^2_{\ell_{\infty}(D)} \leq
  C \sum\limits_{k \geq  \lfloor m^*/2 \rfloor } \frac{N_{K,\varrho_D}(4k) \sigma_k^2}{k} \, ,
$$
where $C>0$ is an absolute constant which can be calculated precisely,
 $m^*$ is the largest number such that $N_{K,\varrho_D}(m^*) \leq n/(10 r\log n)$.
\end{remark}

\section{Improved bounds for sampling numbers}\label{section_improved}

By exactly the same strategy as used in the recent paper by
N.~Nagel, M.~Sch\"{a}fer and T.~Ullrich~\cite{Ni_Sc_Ut20}, we can improve the bound (\ref{prob_l_infty_first_gen})
 for sampling numbers associated to the compact embedding of RKHS with Mercer kernel
into the space of bounded on $D$ functions,
 applying
a modification of the Weaver sub-sampling strategy.

\begin{theorem}[\cite{Ni_Sc_Ut20, NiOlUl16,LimTe2020}]\label{th_NSU}
     Let $k_1, k_2, k_3 > 0$ and $\mathbf{u}_1, ..., \mathbf{u}_n \in
\C^m$ with $\|\mathbf{u}_i\|_2^2 \leq k_1 \frac{m}{n}$ for all $i=1,
..., n$ and
     $$
     k_2 \|\mathbf{w}\|_2^2 \leq \sum_{i=1}^n |\langle \mathbf{w},
\mathbf{u}_i\rangle|^2 \leq k_3 \|\mathbf{w}\|_2^2
     $$
     for all $\mathbf{w} \in \C^m$. Then there is a $J \subseteq [n]$ of
size $\# J \leq C_1 m$ with
     $$
     C_2 \cdot \frac{m}{n} \|\mathbf{w}\|_2^2 \leq \sum_{i \in J}
|\langle \mathbf{w}, \mathbf{u}_i\rangle|^2 \leq C_3 \cdot \frac{m}{n}
\|\mathbf{w}\|_2^2
     $$
     for all $\mathbf{w} \in \C^m$, where $C_1, C_2, C_3$ only depend on
$k_1, k_2, k_3$. More precisely, we can choose
     \[
     C_1 = 1642 \frac{k_1}{k_2} \,,\quad
     C_2 = (2+\sqrt{2})^2k_1 \,,\quad
     C_3 = 1642 \frac{k_1k_3}{k_2}
     \]
     in case $\frac{n}{m}\geq 47 \frac{k_1}{k_2}$. In the regime
$1\le\frac{n}{m}< 47 \frac{k_1}{k_2}$ one may put $C_1 = 47k_1/k_2$,
$C_2 = k_2$, $C_3=47k_1 k_3/k_2$.
\end{theorem}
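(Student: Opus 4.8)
The plan is to derive the statement from the Marcus--Spielman--Srivastava solution of Weaver's $KS_2$ conjecture, applied iteratively as a balanced subsampling device, in the spirit of \cite{Ni_Sc_Ut20} and \cite{NiOlUl16}. First I would reduce to a Parseval frame. Writing $S:=\sum_{i=1}^n\mathbf{u}_i\mathbf{u}_i^\ast$, the frame hypothesis reads $k_2 I\preceq S\preceq k_3 I$, so $S$ is invertible and $\mathbf{v}_i:=S^{-1/2}\mathbf{u}_i$ satisfies $\sum_{i=1}^n\mathbf{v}_i\mathbf{v}_i^\ast=I$ with $\|\mathbf{v}_i\|_2^2\le k_2^{-1}\|\mathbf{u}_i\|_2^2\le\delta$, where $\delta:=\tfrac{k_1}{k_2}\cdot\tfrac{m}{n}$. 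Any $J$ with $\alpha I\preceq\sum_{i\in J}\mathbf{v}_i\mathbf{v}_i^\ast\preceq\beta I$ then gives, after conjugating by $S^{1/2}$, the bounds $\alpha k_2 I\preceq\sum_{i\in J}\mathbf{u}_i\mathbf{u}_i^\ast\preceq\beta k_3 I$; this dictionary explains the factors $k_2,k_3$ in $C_2,C_3$ and $k_1/k_2$ in $C_1$. The statement splits into two regimes according to the size of $\delta$. If $n/m<47k_1/k_2$, i.e.\ $\delta>1/47$, I would simply take $J=[n]$: then $\sum_{i\in J}\mathbf{u}_i\mathbf{u}_i^\ast=S$, and $k_2I\preceq S\preceq k_3I$ together with $m\le n$ and $n/m\le 47k_1/k_2$ immediately yields the claim with $C_1=47k_1/k_2$, $C_2=k_2$, $C_3=47k_1k_3/k_2$. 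The substantial case is $\delta\le 1/47$.

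The engine is a one-step balanced splitting obtained from Marcus--Spielman--Srivastava by a doubling trick. Given a Parseval frame $(\mathbf{w}_i)_{i\in I}$ in $\C^m$ with $\max_i\|\mathbf{w}_i\|_2^2\le\epsilon$, I would form independent random vectors $\mathbf{r}_i\in\C^{2m}$ equal to $(\sqrt2\,\mathbf{w}_i,0)^\top$ or $(0,\sqrt2\,\mathbf{w}_i)^\top$ with equal probability. Then $\sum_i\Ex[\mathbf{r}_i\mathbf{r}_i^\ast]=\diag\!\big(\sum_i\mathbf{w}_i\mathbf{w}_i^\ast,\sum_i\mathbf{w}_i\mathbf{w}_i^\ast\big)=I_{2m}$ and $\|\mathbf{r}_i\|_2^2=2\|\mathbf{w}_i\|_2^2\le 2\epsilon$, so the theorem produces a realisation, i.e.\ a partition $I=S_1\sqcup S_2$, with $\big\|\sum_{i\in S_j}\mathbf{w}_i\mathbf{w}_i^\ast\big\|\le\tfrac12(1+\sqrt{2\epsilon})^2$ for both $j=1,2$. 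Because the upper bound holds for both blocks and their frame operators sum to $I$, each block also obeys the lower bound $\sum_{i\in S_j}\mathbf{w}_i\mathbf{w}_i^\ast\succeq\big(1-\tfrac12(1+\sqrt{2\epsilon})^2\big)I$. Crucially, both blocks satisfy the same two-sided bound, so I am free to discard the larger block and keep the one of cardinality $\le\#I/2$.

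Starting from the preconditioned Parseval system $(A_0=B_0=1,\ |I_0|=n)$ I would iterate: at level $\ell$ the current subframe has operator $B_\ell I\preceq T_\ell\preceq A_\ell I$; renormalising by $T_\ell^{-1/2}$ restores tightness and gives norm ratio $\epsilon_\ell=\delta/B_\ell$, and the engine followed by conjugation with $T_\ell^{1/2}$ yields $A_{\ell+1}=\tfrac12(1+\sqrt{2\epsilon_\ell})^2A_\ell$, $B_{\ell+1}=B_\ell-A_{\ell+1}$, while keeping the smaller block gives $|I_{\ell+1}|\le|I_\ell|/2$. Under ideal halving one has $A_\ell=2^{-\ell}$ and $B_\ell=1-\sum_{j\le\ell}2^{-j}=A_\ell$, so tightness is preserved up to the genuine factors $\tfrac12(1+\sqrt{2\epsilon_\ell})^2$. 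Since $A_\ell$ decreases while $\delta$ is fixed, $\epsilon_\ell$ grows geometrically and the iteration can proceed only while $\epsilon_\ell$ stays below the Weaver barrier $\epsilon^\ast=(\sqrt2-1)^2/2=1/(2+\sqrt2)^2$ (the value making $\tfrac12(1+\sqrt{2\epsilon})^2<1$). I would stop at the first level $\ell^\ast$ where $\epsilon_{\ell^\ast}=\delta/B_{\ell^\ast}$ reaches $\epsilon^\ast$, which forces the terminal lower bound $B_{\ell^\ast}=(2+\sqrt2)^2\delta$; bounding the convergent product $\prod_\ell(1+\sqrt{2\epsilon_\ell})^2$ of degradation factors supplies the terminal upper bound $A_{\ell^\ast}\le 1642\,\delta$. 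Since each factor is $\ge 1$, one has $2^{-\ell^\ast}\le A_{\ell^\ast}$, hence $\#J=|I_{\ell^\ast}|\le n\,2^{-\ell^\ast}\le nA_{\ell^\ast}\le 1642\,\delta n=1642\tfrac{k_1}{k_2}m$. Conjugating back by $S^{1/2}$ then gives $(2+\sqrt2)^2\delta\,k_2 I\preceq\sum_{i\in J}\mathbf{u}_i\mathbf{u}_i^\ast\preceq 1642\,\delta\,k_3 I$, i.e.\ exactly $C_2=(2+\sqrt2)^2k_1$, $C_3=1642k_1k_3/k_2$, $C_1=1642k_1/k_2$, with the threshold $\delta\le1/47$ (equivalently $n/m\ge47k_1/k_2$) being precisely what keeps $\epsilon_\ell<\epsilon^\ast$ throughout.

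The main obstacle is the joint control of the two competing quantities along the iteration: every halving step shrinks the scale $A_\ell$ but inflates the effective ratio $\epsilon_\ell=\delta/B_\ell$, and one must show simultaneously that $\epsilon_\ell$ never crosses the barrier $\epsilon^\ast$ and that the lower bounds $B_\ell=B_{\ell-1}-A_\ell$ remain a fixed fraction of $A_\ell$. Making this rigorous requires quantifying the convergence of $\prod_\ell(1+\sqrt{2\epsilon_\ell})^2$ with explicit numerical constants and verifying that $\delta\le 1/47$ is the sharp cut-off for which the entire chain of Weaver splittings is admissible; the bookkeeping of whether the ratio is measured against $B_\ell$ or $A_\ell$, and the accumulation of the degradation over $\ell^\ast\approx\log_2(1/\delta)$ levels, is the delicate point. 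Once these explicit estimates are in place, the translation through the preconditioning $S^{1/2}$ is routine and yields the asserted constants.
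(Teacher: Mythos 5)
The paper does not prove Theorem~\ref{th_NSU}; it is quoted verbatim from the cited references \cite{Ni_Sc_Ut20,NiOlUl16,LimTe2020}, so there is no in-paper argument to compare against. Your proposal reproduces essentially the proof strategy of those references: precondition to a Parseval frame via $S^{-1/2}$, handle the regime $n/m<47k_1/k_2$ by taking $J=[n]$, and otherwise iterate the Marcus--Spielman--Srivastava two-block splitting (via the standard doubling trick in $\C^{2m}$), keeping the smaller block at each level and tracking the two-sided bounds $B_\ell I\preceq T_\ell\preceq A_\ell I$ until the effective norm ratio $\delta/B_\ell$ reaches the Weaver barrier. The skeleton and the provenance of each constant ($k_2,k_3$ from the preconditioning, $(2+\sqrt2)^2$ from the barrier, the cardinality bound from $2^{-\ell^*}\le A_{\ell^*}$) are all correct. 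The one piece you do not carry out --- and correctly flag as the delicate point --- is the explicit numerical control of the accumulated degradation $\prod_\ell(1+\sqrt{2\epsilon_\ell})^2$ and the verification that $\delta\le 1/47$ keeps every level below the barrier; this bookkeeping is exactly what produces the constants $1642$ and $47$ in \cite{Ni_Sc_Ut20}, and without it your argument is a correct sketch rather than a complete proof of the stated numerical values.
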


\begin{theorem}\label{main_samp} Under the assumptions of Theorem \ref{prob_l_infty_general} we obtain the following bounds for the sampling numbers $g_n(\Id:H(K) \to \ell_{\infty}(D))$. The measure $\varrho_D$ is at our disposal.

{\em (i)} There is an absolute constant $b>0$ such that
$$
g_{\lfloor bm\log m \rfloor}(\Id)^2  \leq c_3 \max\Big\{\frac{N_{K,\varrho_D}(m)}{m}\sum\limits_{k\geq \lfloor m/2 \rfloor}\sigma_k^2\,,
\sum\limits_{k\geq \lfloor m/2 \rfloor}\frac{N_{K,\varrho_D}(4k)\sigma_k^2}{k}\Big\}\,.
$$

{\em (ii)} There are absolute constants $c_4,c_5>0$ such that $$
 g_{m}( \Id)^2 \leq
 c_4\max\Big\{\frac{N_{K,\varrho_D}(m)\log m}{m}\sum\limits_{k\geq  \lfloor c_5m  \rfloor}\sigma_k^2\,,
\sum\limits_{k\geq  \lfloor c_5m  \rfloor}\frac{N_{K,\varrho_D}(4k)\sigma_k^2}{k}\Big\}\,.
 $$
\end{theorem}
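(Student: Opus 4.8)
The plan is to read off (i) directly from Theorem~\ref{prob_l_infty_general} by a standard probabilistic existence argument. Fix $r>1$ (say $r=2$). For $n$ large enough the failure probability $c_2 n^{1-r}$ in \eqref{prob_l_infty_first_gen} is strictly less than $1$, so there exists at least one admissible node set $\bX=(\bx^1,\dots,\bx^n)$ for which the weighted least squares operator $\widetilde{S}^m_{\bX}$ from Algorithm~\ref{algo1:reweighted} realizes the right-hand side of \eqref{prob_l_infty_first_gen}. Since $g_n$ is the infimum over all node sets and all reconstruction maps, this single good configuration yields the corresponding deterministic bound for $g_n$. It remains to relate $n$ and $m$: with $m=\lfloor n/(c_1 r\log n)\rfloor$ one has $n\asymp c_1 r\,m\log n\asymp m\log m$, because $\log n\asymp\log m$ in this regime. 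Choosing $b$ to absorb $c_1 r$ gives $n=\lfloor bm\log m\rfloor$ and hence the claimed estimate for $g_{\lfloor bm\log m\rfloor}$.

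\textbf{Part (ii), setup.} Here I would follow the sub-sampling strategy of \cite{Ni_Sc_Ut20}. Draw $\tilde n\asymp m\log m$ nodes i.i.d.\ with respect to $\varrho_m\,\dd\varrho_D$ and form the normalized vectors
\[
\mathbf{u}_i:=\frac{1}{\sqrt{\tilde n}}\Big(\tfrac{\eta_1(\bx^i)}{\sqrt{\varrho_m(\bx^i)}},\dots,\tfrac{\eta_{m-1}(\bx^i)}{\sqrt{\varrho_m(\bx^i)}}\Big)\in\C^{m-1},\qquad i=1,\dots,\tilde n .
\]
By the choice \eqref{density_f} of $\varrho_m$ one has $\|\mathbf{u}_i\|_2^2\le 2(m-1)/\tilde n$ deterministically, i.e.\ the row condition of Theorem~\ref{th_NSU} with $k_1=2$. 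Moreover $\sum_i|\langle\mathbf{w},\mathbf{u}_i\rangle|^2=\tfrac1{\tilde n}\|\widetilde{\bL}_{\tilde n,m}\mathbf{w}\|_2^2$, so the two-sided spectral estimate behind \eqref{spectral_norm} (the concentration from \cite[Theorem~5.1]{Ni_Sc_Ut20}) supplies, with high probability, the frame condition with $k_2=\tfrac12$, $k_3=\tfrac32$. On the same event I would retain the residual estimate from the proof of Theorem~\ref{prob_l_infty_general}: by \cite[Prop.~3.8]{MoUll20} applied to $\widetilde{\mathbf{\Phi}}$ one controls $\sum_{i=1}^{\tilde n}\frac{|(f-P_{m-1}f)(\bx^i)|^2}{\varrho_m(\bx^i)}$. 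Intersecting these high-probability events, a good configuration exists.

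\textbf{Part (ii), execution.} Apply Theorem~\ref{th_NSU} to extract $J\subseteq[\tilde n]$ with $\#J\le C_1 m$ and $\sum_{i\in J}|\langle\mathbf{w},\mathbf{u}_i\rangle|^2\ge C_2\tfrac{m}{\tilde n}\|\mathbf{w}\|_2^2$. Translating back, the restricted least squares matrix $\widetilde{\bL}_{J,m}$ satisfies $\|\widetilde{\bL}_{J,m}\mathbf{w}\|_2^2\ge C_2 m\|\mathbf{w}\|_2^2$, hence $\|(\widetilde{\bL}_{J,m}^{\ast}\widetilde{\bL}_{J,m})^{-1}\widetilde{\bL}_{J,m}^{\ast}\|_{2\to2}\le(C_2 m)^{-1/2}$. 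Running Algorithm~\ref{algo1:reweighted} on $\{\bx^i:i\in J\}$ and repeating the coefficient estimate \eqref{estim_ck_l2} gives
\[
\sum_{k=1}^{m-1}|c_k|^2\le\frac{1}{C_2 m}\sum_{i\in J}\frac{|(f-P_{m-1}f)(\bx^i)|^2}{\varrho_m(\bx^i)}\le\frac{1}{C_2 m}\sum_{i=1}^{\tilde n}\frac{|(f-P_{m-1}f)(\bx^i)|^2}{\varrho_m(\bx^i)},
\]
the last step merely dropping to the full index set. The full sum is controlled by the residual estimate above and, using $\tilde n\asymp m\log m$ together with $\sigma_m^2\le\tfrac2m\sum_{k\ge\lfloor m/2\rfloor}\sigma_k^2$, is $\lesssim\log m\sum_{k\ge\lfloor m/2\rfloor}\sigma_k^2$. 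Via \eqref{second_term_linfty} this yields $\|P_{m-1}f-\widetilde{S}^m_{\bX}f\|_{\ell_\infty(D)}^2\lesssim\frac{N_{K,\varrho_D}(m)\log m}{m}\sum_{k\ge\lfloor m/2\rfloor}\sigma_k^2$, the first entry of the maximum, while the second entry is the deterministic bound \eqref{op_norm_infty} for $\|f-P_{m-1}f\|_{\ell_\infty(D)}^2$. Since $\#J\le C_1 m$ with an absolute $C_1$, replacing $m$ by $\lfloor m/C_1\rfloor$ and using that $N_{K,\varrho_D}$ is non-decreasing rewrites the bound in terms of $g_m$ with $c_5=1/(2C_1)$.

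\textbf{Main obstacle.} The algebra is routine; the real work is organizing the probabilistic input so that a single node draw simultaneously delivers (a) the row bound and the two-sided frame bounds required to invoke Theorem~\ref{th_NSU}, and (b) the residual concentration of $\widetilde{\mathbf{\Phi}}$, and then tracking the absolute constants through the normalization change $2/\tilde n\mapsto 1/(C_2 m)$ so that precisely one factor $\log m$ is lost. The point that makes the residual harmless after sub-sampling is the trivial monotonicity $\sum_{i\in J}\le\sum_{i=1}^{\tilde n}$; the entire stability gain must therefore come from Theorem~\ref{th_NSU}, which is the delicate ingredient.
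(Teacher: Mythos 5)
Your proposal is correct and follows essentially the same route as the paper: part (i) is the direct deterministic consequence of Theorem~\ref{prob_l_infty_general} via positive probability of success, and part (ii) applies the Weaver-type sub-sampling of Theorem~\ref{th_NSU} to the random matrix $\widetilde{\bL}_{n,m}$ and reruns the coefficient estimate with the new lower frame bound, exactly as the paper does by deferring to the proof of Theorem~5 in \cite{Ni_Sc_Ut20}. Your writeup in fact supplies the details (row-norm bound $k_1=2$, the two-sided frame condition, the harmless enlargement $\sum_{i\in J}\le\sum_{i=1}^{\tilde n}$ of the residual, and the renormalization producing the single $\log m$ factor) that the paper only cites.
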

\begin{proof} The statement (i) follows immediately from Theorem~\ref{prob_l_infty_general}.

To get (ii), we apply a sampling operator $\widetilde{S}_{J}^m $ using $O(m)$ sampling nodes
$(\bx^i)_{i\in J} \subset \bX$ constructed
 out of a random draw in combination with a sub-sampling procedure,
instead of $O(m \log m)$ (as in Algorithm~\ref{algo1:reweighted}).  That is, we omit
some rows in the matrix $\bL_{n,m}$ from (\ref{matrix_Lm})
(respectively, in $\widetilde{\bL}_{n,m}$)
by constructing  a sum-matrix $\widetilde{\bL}_{J,m}$ having $\# J = O(m)$ rows, such that
for all $\mathbf{w} \in \C^{m-1}$ (see Theorem~\ref{th_NSU}) it holds
$$
c_2 \|\mathbf{w}\|_2^2 \leq \frac{1}{m} \| \widetilde{\bL}_{J,m} \mathbf{w} \|_2^2
\leq C_2 \|\mathbf{w}\|_2^2 \,.
$$
With this matrix we perform the least squares method, applied to the shrinked vector of function
samples $(f(\bx^i))_{i\in J}$, see the proof of Theorem~5 in~\cite{Ni_Sc_Ut20} for the details.
\end{proof}

\begin{corollary}\label{cor2} If there is a measure $\varrho_D$ on $D$ such that $N_{K,\varrho_D}(k) = \mathcal{O}(k)$ we obtain
\be\label{cor2_estimate}
	g_m(\Id) \leq C_{\varrho_D,K}\min\{a_{\lfloor m/(c_6 \log m)\rfloor}(\Id)\,,\sqrt{\log m}\cdot a_{\lfloor c_5 m \rfloor}(\Id)\}\,.
\ee
\end{corollary}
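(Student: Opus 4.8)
The plan is to read off the two terms of the minimum directly from the two parts of Theorem~\ref{main_samp}, using the hypothesis $N_{K,\varrho_D}(k)=\mathcal{O}(k)$ to collapse the maxima, and then to invoke \cite[Lem.\ 3.3]{CoKuSi16} (exactly as already done in Corollary~\ref{cor}) to pass from the tail sums $\sum_{k\geq\cdot}\sigma_k^2$ to the approximation numbers $a_n(\Id)=a_n(\Id_{K,\infty})$. Writing $N_{K,\varrho_D}(k)\leq c\,k$ for all $k$, we have simultaneously $\frac{N_{K,\varrho_D}(m)}{m}\leq c$ and $\frac{N_{K,\varrho_D}(4k)}{k}\leq 4c$, so both expressions inside each maximum in Theorem~\ref{main_samp} are bounded by a fixed multiple of the corresponding tail sum. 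The constant produced at this step is precisely the $C_{\varrho_D,K}$ of the statement.

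For the first entry of the minimum I would begin from Theorem~\ref{main_samp}{\em (i)}. Collapsing the maximum as above yields $g_{\lfloor bm\log m\rfloor}(\Id)^2 \lesssim \sum_{k\geq\lfloor m/2\rfloor}\sigma_k^2 \lesssim a_{\lfloor m/2\rfloor}(\Id)^2$, where the final inequality is the tail-sum bound $\sum_{k\geq m}\sigma_k^2\lesssim a_m(\Id)^2$ from \cite[Lem.\ 3.3]{CoKuSi16} already used in Corollary~\ref{cor}. It then remains to re-index: setting $n:=\lfloor bm\log m\rfloor$ and inverting gives $n/\log n\asymp bm$, hence $\lfloor n/(c_6\log n)\rfloor\leq\lfloor m/2\rfloor$ for a suitable absolute constant $c_6$ and all large $m$. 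Since the approximation numbers are non-increasing, $a_{\lfloor m/2\rfloor}(\Id)\leq a_{\lfloor n/(c_6\log n)\rfloor}(\Id)$, and therefore $g_n(\Id)\lesssim a_{\lfloor n/(c_6\log n)\rfloor}(\Id)$.

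For the second entry I would use Theorem~\ref{main_samp}{\em (ii)} directly, with no re-indexing required. Under $N_{K,\varrho_D}(k)=\mathcal{O}(k)$ the maximum collapses to $g_m(\Id)^2 \lesssim \log m\sum_{k\geq\lfloor c_5 m\rfloor}\sigma_k^2 \lesssim \log m\cdot a_{\lfloor c_5 m\rfloor}(\Id)^2$, again by \cite[Lem.\ 3.3]{CoKuSi16}, whence $g_m(\Id)\lesssim\sqrt{\log m}\cdot a_{\lfloor c_5 m\rfloor}(\Id)$. Since both bounds hold for the same quantity $g_m(\Id)$, taking the smaller of the two gives exactly \eqref{cor2_estimate}.

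The only genuinely delicate point is the re-indexing in the first bound: one must check that $n=\lfloor bm\log m\rfloor$ can be inverted so that $\lfloor n/(c_6\log n)\rfloor\leq\lfloor m/2\rfloor$ holds with an \emph{absolute} constant $c_6$, uniformly in $m$ (using $\log n\asymp\log m$ and then absorbing the finitely many small values of $m$ into $C_{\varrho_D,K}$). Everything else is a direct substitution into Theorem~\ref{main_samp} combined with the cited lemma.
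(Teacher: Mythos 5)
Your proposal is correct and follows essentially the same route as the paper: both collapse the maxima in Theorem~\ref{main_samp} using $N_{K,\varrho_D}(k)=\mathcal{O}(k)$ and then apply the tail-sum bound $\sum_{j\geq n}\sigma_j^2\leq\varrho_D(D)\,a_n(\Id)^2$ from \cite[Lem.\ 3.3]{CoKuSi16}. Your explicit treatment of the re-indexing $n=\lfloor bm\log m\rfloor\Rightarrow\lfloor n/(c_6\log n)\rfloor\leq\lfloor m/2\rfloor$ is a detail the paper leaves implicit, but it is the same argument.
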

\begin{proof}
In the case $N_{K,\varrho_D}(k) = \mathcal{O}(k)$, from Theorem~\ref{main_samp} we obtain
$$
g_m(\Id)^2 \leq C \min \Big\{\sum\limits_{k\geq \lfloor m/(c_6 \log m)\rfloor} \sigma_k^2\,,
 \log m  \sum\limits_{k\geq \lfloor c_5 m \rfloor}\sigma_k^2
\Big\}
$$
with an absolute constant $C>0$.
Taking into account, that
\begin{equation}\label{cobos}
 \sum_{j=n}^{\infty} \sigma_j^2  \leq
\varrho_D(D) \cdot a_n(\Id)^2
\end{equation}
(see  F.~Cobos, T.~K\"{u}hn and W.~Sickel~\cite[Lem.\ 3.3]{CoKuSi16}, which was proved in a more general case),
 we get (\ref{cor2_estimate}).
\end{proof}

\begin{remark}
The estimate from Corollary~\ref{cor2} improves the upper bound for sampling numbers of the embeddings of RKHS, consisting of
continuous periodic functions, into $L_{\infty} ([0,1)^d)$, that was established in a recent result by L.~K\"{a}mmerer, see \cite[Theorem~4.11]{Kam19}.
Note also, that this result was stated under stronger conditions on a weight function.
 In turn, it is an an improvement of the previous result by L.~K\"{a}mmerer and T.~Volkmer~\cite{Kam_Vol19}.
\end{remark}

\begin{remark} The estimate from (ii) of Theorem~\ref{main_samp} is non-constructive, since we have just
the fact of existence of such algorithm.
\end{remark}

\begin{remark} The bound in \eqref{cobos} appears at several places in the literature. See for instance R.~Kunsch~\cite[Prop.~2.7]{Kunsch2018}, K.~Yu.~Osipenko and O.~G.~Parfenov~\cite[Theorem~3]{Osipenko_Parfenov1995}, F.~Cobos, T.~K\"{u}hn and W.~Sickel~\cite[Lem.~3.3]{CoKuSi16} and also F.~Kuo, G.~W.~Wasilkowski and H.~Wo\'{z}niakowski~\cite{KWW08}.
\end{remark}

\section{Sampling and Kolmogorov numbers}

For a compact set $F \subset  \ell_{\infty}(D)$ of complex-valued functions defined on a compact domain $D \subset \R^d$, let us denote by
$d_m(\Id\colon F \rightarrow \ell_{\infty}(D))$ its $m$th Kolmogorov number, i.e.,
$$
 d_m(\Id) :=\inf\limits_{ V_m} \sup\limits_{ \| f \|_{F} \leq 1} \inf_{g \in V_m} \|  f - g  \|_{ \ell_{\infty}(D) },
$$
where $V_m$ is $m-1$-dimensional subspace of $F$. Assume also, that we know the optimal subspace for $d_m(\Id)$, i.e., a subspace  $V_m^*$ such that
$$
\sup\limits_{ \| f \|_{F} \leq 1} \inf_{g \in V_m^*} \|  f - g  \|_{ \ell_{\infty}(D) } \leq \frac{3}{2} d_m(\Id).
$$
Let further, $\varrho$ be a finite measure on $D$ and $(\phi_n)_{n=1}^{\infty}$ be any ONS in $V_m^*$ with respect to~$\varrho$. The following statement holds.
\begin{proposition}\label{samp_vs_Kolm}
Let $F$ be compactly embedded subspace (of complex-valued continuous functions) of $\ell_{\infty}(D)$, where  $D \subset \R^d$ is a compact subset. Then there is an absolute constant $b>0$ such that
 $$
g_{\lfloor bm\log m \rfloor}(\Id:F\to \ell_{\infty}(D)) \leq (2 +  \sqrt{ \varrho(D)/(m-1) }) \sqrt{N_{\varrho, V_m^*}} d_m(\Id)
$$
holds, where
\be\label{N_kolmog}
N_{\varrho, V_m^*} := \sup\limits_{f\in V_m^{*}}\frac{\|f\|^2_{\ell_{\infty}(D)}}{\|f\|^2_{L_2(D,\varrho)}},
\ee
 and the finite measure $\varrho$ on $D$ is at our disposal.
\end{proposition}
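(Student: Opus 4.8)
The plan is to run the argument of Theorem~\ref{prob_l_infty_general} almost verbatim, with the near-optimal Kolmogorov subspace $V_m^*$ taking the role of $\operatorname{span}\{e_1^*,\dots,e_{m-1}^*\}$ and the given ONS $(\phi_k)_{k=1}^{m-1}$ taking the role of $(\eta_k)$. Concretely, I would draw the nodes $\bx^1,\dots,\bx^n$ with $n=\lfloor bm\log m\rfloor$ independently according to the Christoffel density $\varrho_m(\bx)=\frac{1}{m-1}\sum_{k=1}^{m-1}|\phi_k(\bx)|^2$ with respect to $\varrho$; since the $\phi_k$ are $\varrho$-orthonormal this is a genuine probability density, and the reweighting bound $\widetilde N(m)=\sup_{\bx}\big(\sum_k|\phi_k(\bx)|^2\big)/\varrho_m(\bx)=m-1$ holds identically. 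Let $S_{\bX}$ be the weighted least squares operator onto $V_m^*$ built from these samples exactly as in Algorithm~\ref{algo1:reweighted}. For $b$ chosen large enough, \cite[Theorem~5.1]{Ni_Sc_Ut20} guarantees with high probability both the stability estimate $\|(\widetilde{\bL}^{\ast}\widetilde{\bL})^{-1}\widetilde{\bL}^{\ast}\|_{2\to2}\le\sqrt{2/n}$ and, simultaneously, concentration of the quadrature $\frac1n\sum_i 1/\varrho_m(\bx^i)$ around its mean $\varrho(D)$; these two facts are the only probabilistic inputs I need.

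Next I would split the error using that $S_{\bX}$ reproduces every element of $V_m^*$. For $\|f\|_F\le1$ choose a best approximation $g=g(f)\in V_m^*$ with $\|f-g\|_{\ell_{\infty}(D)}\le\frac32 d_m(\Id)$, available by the near-optimality of $V_m^*$. Since $(\Id-S_{\bX})$ annihilates $V_m^*$ we have $f-S_{\bX}f=(f-g)-S_{\bX}(f-g)$, hence
\[
\|f-S_{\bX}f\|_{\ell_{\infty}(D)}\le \|f-g\|_{\ell_{\infty}(D)}+\|S_{\bX}(f-g)\|_{\ell_{\infty}(D)}.
\]
The first summand is at most $\frac32 d_m(\Id)$, and because $N_{\varrho,V_m^*}\ge(m-1)/\varrho(D)$ (the average of $\sum_k|\phi_k|^2$ against $\varrho$ is $m-1$, which is dominated by the supremum) this term is $\le\frac32\sqrt{\varrho(D)/(m-1)}\,\sqrt{N_{\varrho,V_m^*}}\,d_m(\Id)$; this is the source of the additive $\sqrt{\varrho(D)/(m-1)}$ in the prefactor. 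For the second summand I use $S_{\bX}(f-g)\in V_m^*$ and the very definition \eqref{N_kolmog} of $N_{\varrho,V_m^*}$ to lift to $L_2$: $\|S_{\bX}(f-g)\|_{\ell_{\infty}(D)}\le\sqrt{N_{\varrho,V_m^*}}\,\|S_{\bX}(f-g)\|_{L_2(D,\varrho)}$.

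It then remains to estimate $\|S_{\bX}(f-g)\|_{L_2(D,\varrho)}=(\sum_k|c_k|^2)^{1/2}$, exactly the step carried out in \eqref{estim_ck_l2}–\eqref{sum_c_tilde_1}. Stability gives $\sum_k|c_k|^2\le\frac2n\sum_i|(f-g)(\bx^i)|^2/\varrho_m(\bx^i)$, and here I would insert the $L_2(\varrho)$-orthogonal decomposition $f-g=(Qf-g)+(f-Qf)$, where $Q$ is the $L_2(\varrho)$-projection onto $V_m^*$: the component $Qf-g\in V_m^*$ is reproduced isometrically, contributing $\|Qf-g\|_{L_2}\le\|f-g\|_{L_2}\le\sqrt{\varrho(D)}\,\|f-g\|_{\ell_{\infty}(D)}$, while the orthogonal residual $f-Qf$ has vanishing mean in the quadrature and therefore contributes only a lower-order fluctuation controlled by the conditioning event. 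Collecting these two pieces (and absorbing the fluctuation) produces the leading constant $2$ in front of $\sqrt{N_{\varrho,V_m^*}}\,d_m(\Id)$. Finally, since the whole estimate holds with positive probability, a deterministic admissible node set of cardinality $\lfloor bm\log m\rfloor$ exists, and the infimum over node sets in the definition of $g_{\lfloor bm\log m\rfloor}$ yields the claim. The main obstacle is precisely the uniformity in $f$: the approximant $g(f)$ varies with $f$, yet one random node set must serve the whole unit ball. This is what forces me to (i) exploit that $\Id-S_{\bX}$ kills all of $V_m^*$, so the $f$-dependence of $g$ is harmless, and (ii) replace any per-function concentration by the $f$-independent quadrature control of $\sum_i 1/\varrho_m(\bx^i)$ together with the pointwise bound $|(f-g)(\bx^i)|\le\frac32 d_m(\Id)$, which decouple the randomness from $f$.
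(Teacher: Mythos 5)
Your overall skeleton is the right one — split off a near-best approximant $g\in V_m^*$, use that the least squares operator reproduces $V_m^*$, lift the $\ell_\infty$-norm of the $V_m^*$-component to $L_2(D,\varrho)$ via $\sqrt{N_{\varrho,V_m^*}}$, invoke the matrix stability bound from \cite[Theorem~5.1]{Ni_Sc_Ut20}, and absorb the first summand into the prefactor using $N_{\varrho,V_m^*}\ge (m-1)/\varrho(D)$. However, there is a genuine gap in your treatment of the empirical sum $\frac{2}{n}\sum_i |(f-g)(\bx^i)|^2/\varrho_m(\bx^i)$. Because you take the pure Christoffel density $\varrho_m=\frac{1}{m-1}\sum_k|\phi_k|^2$ with no lower bound, you cannot control this sum pointwise, and your proposed repair — decomposing $f-g=(Qf-g)+(f-Qf)$ and claiming the residual ``has vanishing mean in the quadrature and therefore contributes only a lower-order fluctuation controlled by the conditioning event'' — does not go through. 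The quantity $\frac1n\sum_i|(f-Qf)(\bx^i)|^2/\varrho_m(\bx^i)$ has the correct expectation $\|f-Qf\|_{L_2(\varrho)}^2$ for each fixed $f$, but you need it uniformly over the unit ball of $F$ from a single draw of nodes; in Theorem~\ref{prob_l_infty_general} this uniformity is obtained from the operator-norm concentration of the infinite matrix $\widetilde{\mathbf{\Phi}}$, which relies essentially on the Hilbert space structure and the eigenbasis expansion of the tail $f-P_{m-1}f$. Here $F$ is merely a compactly embedded subspace of $\ell_\infty(D)$, so no such argument is available, and concentration of $\frac1n\sum_i 1/\varrho_m(\bx^i)$ (itself dubious, since $1/\varrho_m$ is unbounded) would at best let you bound the residual by $\|f-Qf\|_{\ell_\infty(D)}$, which costs another factor $\sqrt{N_{\varrho,V_m^*}}$ and destroys the claimed rate.

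The paper sidesteps all of this by a different choice of density: $\varrho_m(\bx)=\frac12\bigl(\frac{1}{m-1}\sum_{k=1}^{m-1}|\phi_k(\bx)|^2+1\bigr)$, in the spirit of \eqref{sd}. The additive constant gives $\varrho_m\ge 1/2$ everywhere while still yielding $\widetilde N(m)\le 2(m-1)$, so that deterministically $|(f-g)(\bx^i)|^2/\varrho_m(\bx^i)\le 2\|f-g\|_{\ell_\infty(D)}^2$ and the whole sum is bounded by $2n\|f-g\|_{\ell_\infty(D)}^2$ on every realization. The only probabilistic input is then the spectral-norm bound $\|(\widetilde{\bL}^{\ast}\widetilde{\bL})^{-1}\widetilde{\bL}^{\ast}\|_{2\to 2}\le\sqrt{2/n}$, which gives $\|g-\widetilde S^m_{\bX}f\|_{\ell_\infty(D)}\le 2\sqrt{N_{\varrho,V_m^*}}\,\|f-g\|_{\ell_\infty(D)}$ directly, with no orthogonal decomposition and no per-function concentration. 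This floor on the density is the one idea your argument is missing; with it, the rest of your proof closes exactly as you describe.
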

\begin{proof}
Following the proof of Theorem~\ref{prob_l_infty_general}, for $f \in F$, $g \in V_m^*$ and the recovery operator $\widetilde{S}_{\bX}^m f$
from Algorithm~\ref{algo1:reweighted} that uses the density function
$$
\varrho_m (\bx) = \frac{1}{2}\Big(\frac{1}{m-1}\sum_{k=1}^{m-1} \left| \phi_k (\bx) \right|^2 +  1 \Big),
$$
(compare to \eqref{sd}) we get with high probability, that
\begin{align}
\| f-  \widetilde{S}_{\bX}^m f \|_{\ell_{\infty}(D)} & \leq
\| f- g \|_{\ell_{\infty}(D)} + \|g - \widetilde{S}_{\bX}^m f \|_{\ell_{\infty}(D)}
= \| f- g \|_{\ell_{\infty}(D)} + \left\| \sum_{k=1}^{m-1}  \tilde{c}_k  \phi_k (\bx) \right\|_{\ell_{\infty}(D)}
\nonumber \\
& \leq \| f- g \|_{\ell_{\infty}(D)} + \sqrt{N_{\varrho, V_m^*}} \sqrt{ \frac{2}{n} \sum_{i=1}^{n} \frac{ | f(\bx^i) - g(\bx^i) |^2}{1/2} }
\nonumber \\
& \leq \| f- g \|_{\ell_{\infty}(D)} + 2\sqrt{N_{\varrho, V_m^*}} \| f-g \|_{ \ell_{\infty}(D) }.
\label{est_sampl_via_kolm_1}
\end{align}
Since for all $m \geq 2$, it holds
$$
m-1=\int_D  \sum\limits_{k=1}^{m-1}| \phi_k (\bx) |^2 d \varrho(\bx) \leq \varrho(D)\sup\limits_{\bx \in D}\sum\limits_{k=1}^{m-1}| \phi_k (\bx) |^2
$$
we have
$$
N_{\varrho, V_m^*} = \sup_{\bx \in D} \sum_{k=1}^{m-1} \left| \phi_k (\bx) \right|^2 \geq (m-1)/\varrho(D)\,.
$$
It follows from (\ref{est_sampl_via_kolm_1}) that
$$
\| f-  \widetilde{S}_{\bX}^m f \|_{\ell_{\infty}(D)} \leq
(2+\sqrt{ \varrho(D)/(m-1) }) \sqrt{N_{\varrho, V_m^*}} \| f-g \|_{ \ell_{\infty}(D) } .
$$
\end{proof}

\begin{remark}
Note, that Proposition~\ref{samp_vs_Kolm} can be extended
to more general function classes $F$, similar as in \cite{Teml2020} and \cite{KrUl20}.
\end{remark}

The following corollary is in the spirit of \cite[Theorem~1.1]{Teml2020}.
\begin{corollary}
There are universal constants $b, C >0$ such that
it holds
$$
g_{\lfloor bm \rfloor}(\Id) \leq C\sqrt{N_{\varrho, V_m^*}} d_m(\Id)\,, \quad m \geq \varrho(D)\,.
$$
\end{corollary}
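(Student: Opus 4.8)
The plan is to combine the argument of Proposition~\ref{samp_vs_Kolm} with the sub-sampling step of Theorem~\ref{th_NSU}, in exactly the way part~(ii) of Theorem~\ref{main_samp} is deduced from part~(i). The only genuinely new feature relative to Proposition~\ref{samp_vs_Kolm} is the removal of the $\log m$ oversampling; the Kolmogorov number $d_m(\Id)$ and the Christoffel-type quantity $N_{\varrho,V_m^*}$ enter in precisely the same manner, and the hypothesis $m\ge\varrho(D)$ will serve only to render the prefactor universal.

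First I would fix the near-optimal subspace $V_m^*$ and an ONS $(\phi_k)_{k=1}^{m-1}$ in $V_m^*$ with respect to $\varrho$, and draw $n\asymp m\log m$ nodes $\bX=(\bx^1,\dots,\bx^n)$ i.i.d.\ from the density $\varrho_m\,\dd\varrho$ used in the proof of Proposition~\ref{samp_vs_Kolm}. Since that choice of $\varrho_m$ forces $\sup_{\bx}\sum_{k=1}^{m-1}|\phi_k(\bx)|^2/\varrho_m(\bx)\le 2(m-1)$, the rescaled rows $\mathbf u_i:=\tfrac{1}{\sqrt n}\bigl(\phi_k(\bx^i)/\sqrt{\varrho_m(\bx^i)}\bigr)_{k=1}^{m-1}$ of $\tfrac{1}{\sqrt n}\widetilde{\bL}_{n,m}$ satisfy $\|\mathbf u_i\|_2^2\le k_1 m/n$, and a matrix-concentration argument (the same as in \cite{Ni_Sc_Ut20}) yields with high probability the two-sided bound $k_2\|\mathbf w\|_2^2\le\sum_{i=1}^n|\langle\mathbf w,\mathbf u_i\rangle|^2\le k_3\|\mathbf w\|_2^2$ for all $\mathbf w\in\C^{m-1}$. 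This is precisely the hypothesis of Theorem~\ref{th_NSU}.

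Next I would invoke Theorem~\ref{th_NSU} to extract $J\subseteq[n]$ with $\#J\le C_1 m=:\lfloor bm\rfloor$ and scaled frame bounds, which after the obvious renormalization read $c_2\|\mathbf w\|_2^2\le\tfrac1m\|\widetilde{\bL}_{J,m}\mathbf w\|_2^2\le C_2\|\mathbf w\|_2^2$; running the weighted least squares method on the shrunken data $(f(\bx^i))_{i\in J}$ then defines an operator $\widetilde S_J^m$. The lower frame bound gives $\sigma_{\min}(\widetilde{\bL}_{J,m})\ge\sqrt{c_2 m}$, hence controls the pseudoinverse, and because $\varrho_m\ge\tfrac12$ and $\#J\le C_1 m$ the $\ell_2$-norm of the coefficients is bounded by a universal multiple of $\|f-g\|_{\ell_\infty(D)}$. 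The chain of estimates (\ref{est_sampl_via_kolm_1}) from Proposition~\ref{samp_vs_Kolm} then carries over verbatim, with $n$ replaced by $\#J$, producing
$$
\|f-\widetilde S_J^m f\|_{\ell_\infty(D)}\le\bigl(2+\sqrt{\varrho(D)/(m-1)}\bigr)\sqrt{N_{\varrho,V_m^*}}\,\|f-g\|_{\ell_\infty(D)}
$$
for every $g\in V_m^*$. Choosing $g$ a near-best approximant so that $\|f-g\|_{\ell_\infty(D)}\le\tfrac32 d_m(\Id)$ and taking the supremum over $\|f\|_F\le1$ bounds $g_{\lfloor bm\rfloor}(\Id)$ by the right-hand side.

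Finally, the hypothesis $m\ge\varrho(D)$ is exactly what makes the prefactor absolute: it gives $\varrho(D)/(m-1)\le m/(m-1)\le 2$ for $m\ge2$, so $2+\sqrt{\varrho(D)/(m-1)}\le 2+\sqrt2$, which together with the factor $\tfrac32$ is absorbed into a single universal constant $C$. The main obstacle is not any of these deterministic estimates but the verification that the random draw meets the frame hypotheses of Theorem~\ref{th_NSU} with sufficiently high probability; this, however, is the content of the concentration step in \cite{Ni_Sc_Ut20} and needs no new idea in the present setting.
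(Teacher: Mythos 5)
Your proposal is correct and follows exactly the route the paper intends: the paper's own proof is the one-line remark that the corollary ``follows by an application of the subsampling technique'' of Theorem~\ref{th_NSU} to the argument of Proposition~\ref{samp_vs_Kolm}, which is precisely what you carry out. The only cosmetic imprecision is that after subsampling the prefactor $2+\sqrt{\varrho(D)/(m-1)}$ is not reproduced ``verbatim'' (the $2$ becomes $\sqrt{2C_1/c_2}$ from the new frame bounds), but since all constants involved are universal this does not affect the conclusion.
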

\begin{proof}
This follows by an application of the subsampling technique in Theorem 4.1.
\end{proof}

\begin{remark}
It was shown by V.N.~Temlyakov~\cite[Theorem~1.1]{Teml2020}, that for a compact set $F$ of continuous on $D \subset \R^d$ complex-valued functions, there exist two positive constants $b$ and $B$ such that
$$
g_{\lfloor bm \rfloor}(\Id\colon F\rightarrow L_2(D, \varrho) ) \leq B d_m(\Id \colon F\rightarrow L_\infty(D, \varrho)).
$$
For special sets $F$ (in the reproducing kernel Hilbert space setting) the estimate
$$
g_{m}(\Id\colon F\rightarrow L_2(D, \varrho) )^2 \leq C \frac{\log m}{m} \sum\limits_{k \geq   \lfloor cm \rfloor } d_k(\Id \colon F\rightarrow L_2(D, \varrho))^2,
$$
with universal constants $C,c>0$,
was earlier proved by N.~Nagel, M.~Sch\"{a}fer and T.~Ullrich~\cite{Ni_Sc_Ut20} based on D.~Krieg and M.~Ullrich~\cite{KrUl19}. Similar results for non-Hilbert function spaces were obtained in the second part of research by D.~Krieg and M.~Ullrich~\cite{KrUl20}.
\end{remark}

\begin{remark}
The above bound on sampling numbers in terms of Kolmogorov numbers motivates the investigation of Kolmogorov numbers in certain situations. It further motivates the study of the Christoffel function of the related optimal subspaces. Currently we do not know how this bound can be made more explicit and therefore do not have convincing examples. We leave it as an open problem to control the quantity $\sqrt{N_{\varrho, V_m^*}}$ for the ``optimal'' subspace $V^*_m$. Note that the measure $\varrho$ is at our disposal.
\end{remark}

\begin{remark}[Approximation in the uniform norm]
In the recent papers by  V.N.~Temlyakov and T.~Ullrich~\cite{Teml_Ullrich2020_small, Teml_Ullrich2020_small_widths}, a behavior of some asymptotic characteristics of multivariate functions classes
in the uniform norm was studied in the case of ``small smoothness'' of functions. The crucial point here is the fact, that in a small smoothness setting the corresponding
approximation numbers are not square summable. The established estimates for Kolmogorov widths of the Sobolev classes of functions and classes  of functions with bounded mixed differences serve as a powerful tool to investigate sampling recovery problem in $L_2$ and $L_\infty$
(see also Section~7 in \cite{Teml_Ullrich2020_small} and Section~5 in \cite{Teml_Ullrich2020_small_widths}).
\end{remark}

\section{The power of standard information in the uniform norm}\label{section_power_st_inf}
Now we discuss the optimal order of convergence of algorithms that realize upper bounds for
approximation numbers $a_n(\Id)$ and sampling numbers $g_n(\Id)$ defined by (\ref{def_approxim_numbers}) and (\ref{def_sampling_numbers}), for the embedding operator
$\Id\colon H(K) \rightarrow F$.

 The optimal order of convergence $q^{\rm lin}_F$ among all algorithms that use linear information for the identity operator $\Id$
 is defined (see the monograph by E.~Novak and H.~Wo\'{z}niakowski~\cite[Chapt. 26.6.1]{NoWoIII}) as
$$
q^{\rm lin}_F := \sup \left\{ q \geq 0\colon \quad \lim_{n\rightarrow \infty} n^q a_n(\Id) =0 \right\},
$$
and, respectively, for standard information
$$
q^{\rm std}_F := \sup \left\{ q \geq 0\colon \quad \lim_{n\rightarrow \infty} n^q g_n(\Id) =0 \right\}.
$$
In the case $F=L_{2}(D, \varrho_D)$ we write
$ q^{ \rm lin }_{2, \varrho_D} := q^{\rm lin}_{L_{2}(D, \varrho_D)}$ and
$ q^{ \rm std }_{2, \varrho_D} := q^{\rm std}_{L_{2}(D, \varrho_D)}$, if $F=\ell_{\infty}(D)$ put
$ q^{ \rm lin }_{\infty} := q^{\rm lin}_{\ell_{\infty}(D)}$ and $ q^{ \rm std }_{\infty} := q^{\rm std}_{\ell_{\infty}(D)}$.
Note that, when investigating the approximation numbers $a_n(\Id)$, one can take $F=L_{\infty}(D)$ with $\|f\|_{L_{\infty}(D)}=
\operatorname{ess \, sup}_{\bx \in D} |f(\bx)|$ instead of considering the supremum norm of $\ell_{\infty}(D)$ space.

Let us introduce the following two assumptions:
\begin{enumerate}
\item[(i)] $N_{K,\varrho_D}(k) = \mathcal{O}(k^u)$;
  \item[(ii)] there exist $p> 1/2$, $C_2>0$ such that $\sigma_j \leq C_2 j^{-p}$, $j= 1,2,\dots$.
\end{enumerate}
Note that the values of $q$ and $p$ are optimal in the sense that
\begin{align*}
u &:= \inf\left\{u \colon \ N_{K,\varrho_D}(k) = \mathcal{O}(k^u) \right\},
\\
p &:= \sup\{p \colon \ \sigma_j \leq C_2 j^{-p}, \, j= 1,2,\dots\}.
\end{align*}

The stronger conditions on the integral operator were earlier introduced by F.~Kuo, G.~W.~Wasilkowski and H.~Wo\'{z}niakowski in \cite{KWW08, KWW09}. Namely, instead of the condition (i)
the authors assume that
\begin{enumerate}
\item[(i')] there exists $C_1>0$ such that $\| \eta_j \|_{ \ell_{\infty}(D)} \leq C_1$, $j= 1,2,\dots$,
\end{enumerate}
i.e., consider the special case  $N_{K,\varrho_D}(k) = \mathcal{O}(k)$. Note, that the constants $C_1, C_2$ can depend on the dimension $d$. We consider concrete examples of RKHS which satisfy the indicated conditions.

If the second assumption is true, then the optimal rate of convergence among all algorithms that use linear information in the worst case setting and error measured in
$L_{2}(D, \varrho_D)$-norm is $q^{ \rm lin }_{2, \varrho_D}=p$. If (i') and (ii) are satisfied, then, when
 switching from $L_{2}(D, \varrho_D)$ to $L_{\infty}(D)$ norm, we loose $1/2$ in the optimal order~\cite{KWW08}:
$$
q^{ \rm lin }_{\infty}= q^{ \rm lin }_{2, \varrho_D} - 1/2 = p- 1/2\,,
$$
where $p>1/2$. Clearly, we only need the weaker assumption (i) for such a statement.


Now we move to $\ell_{\infty}(D)$-error among all algorithms that use standard information in the worst case setting. It was proved in \cite{KWW09} that under the assumptions (i') and (ii) the optimal order of convergence
\begin{equation}\label{impr}
q^{ \rm std }_{\infty}\in \left[ \frac{2p}{2p+1} \left( p- \frac{1}{2} \right),  p- \frac{1}{2} \right].
\end{equation}
The upper bound can not be improved even for linear information. From Theorem~\ref{main_samp} we get the following improvement over \eqref{impr}.
 \begin{corollary} Suppose that (i) and (ii) hold true with $2p > u$. Then $q^{ \rm std }_{\infty} \geq p-u/2$. In case $u=1$ (or if (i') holds) we have $q^{ \rm std }_{\infty} = q^{\rm lin}_{\infty} = p-1/2$.
 \end{corollary}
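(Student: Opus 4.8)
The plan is to substitute the polynomial assumptions (i) and (ii) directly into the sampling bound of Theorem~\ref{main_samp}(ii) and read off the decay rate of $g_m(\Id)$. Writing $N_{K,\varrho_D}(k)\le C k^u$ and $\sigma_k\le C_2 k^{-p}$, I would estimate the two terms inside the maximum separately.

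For the first term, $\frac{N_{K,\varrho_D}(m)\log m}{m}\sum_{k\ge\lfloor c_5m\rfloor}\sigma_k^2 \lesssim m^{u-1}\log m\sum_{k\ge c_5m}k^{-2p}$; since $p>1/2$ gives $2p>1$, the tail sum converges and is $\asymp m^{1-2p}$, so this term is $\lesssim m^{u-2p}\log m$. For the second term, $\sum_{k\ge\lfloor c_5m\rfloor}\frac{N_{K,\varrho_D}(4k)\sigma_k^2}{k}\lesssim\sum_{k\ge c_5m}k^{u-2p-1}$, and here the hypothesis $2p>u$ is exactly what forces $u-2p-1<-1$, so the series converges with tail $\asymp m^{u-2p}$. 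Taking the maximum and absorbing $\log m\ge 1$ yields $g_m(\Id)^2\lesssim m^{u-2p}\log m$, i.e.\ $g_m(\Id)\lesssim m^{-(p-u/2)}\sqrt{\log m}$.

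The exponent bound then follows at once: for every $q<p-u/2$ we have $m^q g_m(\Id)\lesssim m^{q-(p-u/2)}\sqrt{\log m}\to 0$, since any positive power of $m$ dominates $\sqrt{\log m}$; hence $q^{\rm std}_\infty\ge p-u/2$. For the equality when $u=1$, I would pair this with the matching upper bound. The alternative hypothesis (i') reduces to $u=1$: by \eqref{N-T_BOS_eta} one has $N_{K,\varrho_D}(m)=\mathcal{O}(m)$, while the normalization of the ONS always gives $N_{K,\varrho_D}(m)\ge(m-1)/\varrho_D(D)$, so $u=1$ exactly. Since $a_n(\Id)\le g_n(\Id)$ one always has $q^{\rm std}_\infty\le q^{\rm lin}_\infty$, and $q^{\rm lin}_\infty=p-1/2$ was recorded above; together with $q^{\rm std}_\infty\ge p-1/2$ this forces $q^{\rm std}_\infty=q^{\rm lin}_\infty=p-1/2$.

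The only delicate point is the bookkeeping of the tail sums, where the condition $2p>u$ is precisely the convergence threshold for the dominant second series; the logarithmic factor coming from the sub-sampling step is harmless, since it is swallowed by any power of $m$ in the definition of $q^{\rm std}_\infty$.
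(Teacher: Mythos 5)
Your argument is correct and follows essentially the same route as the paper: substitute the polynomial bounds (i) and (ii) into Theorem~\ref{main_samp}, estimate the two tail sums (with $2p>u$ ensuring convergence of the dominant one), and let the definition of $q^{\rm std}_\infty$ absorb the logarithm; the equality for $u=1$ then follows from $a_n\le g_n$ and $q^{\rm lin}_\infty=p-1/2$ exactly as in the paper. The only cosmetic difference is that you invoke part (ii) of Theorem~\ref{main_samp} (giving the factor $\sqrt{\log m}$) while the paper uses part (i) (giving $(\log n)^{p-u/2}$); both log factors are harmless for the order of convergence, so nothing changes.
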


\begin{proof} Due to the inequality
$$
\sum\limits_{k=n+1}^{\infty} k^{-\alpha}  \leq
  \int_n^{\infty} t^{-\alpha} {\rm d} t  = \frac{n^{-\alpha+1}}{\alpha-1}.
$$
This holds for any $\alpha >1$. We obtain
\begin{gather}
\max\Big\{\frac{N_{K,\varrho_D}(m)}{m}\sum\limits_{k\geq \lfloor m/2 \rfloor}\sigma_k^2,
\sum\limits_{k\geq \lfloor m/2 \rfloor}\frac{N_{K,\varrho_D}(4k)\sigma_k^2}{k}\Big\}
\nonumber \\
 \lesssim \max\Big\{m^{u-1} \sum\limits_{k\geq \lfloor m/2 \rfloor}k^{-2p},
\sum\limits_{k\geq \lfloor m/2 \rfloor} k^{u-1} k^{-2p} \Big\}
\nonumber \\
 \leq
\max\Bigg\{m^{u-1} \left( \lfloor m/2 \rfloor^{-2p} +  \frac{1}{2p-1} \lfloor m/2 \rfloor^{-2p+1} \right),
 \lfloor m/2 \rfloor^{-(2p-u+1)} +  \frac{1}{2p-u} \lfloor m/2 \rfloor^{-(2p-u)}
 \Bigg\}
 \nonumber \\
\leq \max\Bigg\{\frac{2p}{2p-1} m^{u-1} \lfloor m/2 \rfloor^{-2p+1},
 \frac{2p-u+1}{2p-u} \lfloor m/2 \rfloor^{-(2p-u)} \Bigg\}
\lesssim_{p} m^{-(2p-u)}.
\label{power_1}
\end{gather}
Respectively, for $n = \lfloor bm\log m \rfloor$,  where $b>0$ is an absolute constant, the estimate (i) of Theorem~\ref{main_samp} yields
$$
g_{n}(\Id)  \lesssim_p n^{-(p-u/2)} (\log n)^{p-u/2}.
$$

We proved now, that under the considered assumptions $q^{ \rm std }_{\infty}\geq p-u/2$. In case $q = 1$, i.e., where (i) = (i') we have that  $q^{ \rm std }_{\infty} = q^{ \rm lin }_{\infty}=p-1/2$.
\end{proof}

\section{Examples}\label{section_examples}
\subsection{Sobolev type spaces}

Let us begin with an application of the obtained results to classes of periodic functions.
We compare the terms in Corollary \ref{cor2} in the case when
$\sigma_m \asymp_{s,d} m^{-s} (\log m)^{\alpha}$ with some $\alpha>0$. This holds, in particular, for widely used mixed Sobolev classes.

So, let $\T= [0, 2 \pi]$ be a torus where the endpoints of the interval are identified. By $\T^d$ we denote a $d$-dimensional torus
and equip it with the Lebesque measure $(2\pi)^{-d} {\rm d} \bx$.

Let further $\ltwo$ be the space of all (equivalence classes of) $2\pi$-periodic in each component measurable functions $f(\bx)= f(x_1, \dots, x_d)$
on $\T^d$ such that
$$
\|f\|_{\ltwo} = (2\pi)^{-d/2} \left( \int_{\T^d} |f(\bx)|^2 \dd \bx \right)^{1/2} < \infty.
$$
The functions $f \in \ltwo$ are completely defined by their Fourier coefficients  $c_{\bk}(f)$, $\bk\in \Z^d$, with respect to the trigonometric system
$\left\{ e^{i \bk \cdot \bx } \right\}_{\bk \in \Z^d}$
(which form an orthonormal basis in $\ltwo$):
\be\label{Fourier_coef_Ck}
 c_{\bk}(f)= (2\pi)^{-d} \int_{\T^d} f(\bx) e^{-i \bk \dot \bx} \dd \bx, \quad \bk\in \Z^d,
\ee
 namely, it holds
 $$
 f(\bx) = \sum_{\bk\in \Z^d} c_{\bk}(f)  e^{i \bk \dot \bx}.
 $$

The notation $H^w$ stands for a Hilbert space of integrable functions on the torus $\T^d$ such that
$$
\| f \|_{H^w(\T^d)} = \left( \sum_{\bk\in \Z^d} (w(\bk))^2 | c_{\bk}(f)) |^2 \right)^{1/2} < \infty,
$$
where $w(\bk)>0$, $\bk\in \Z^d$, are certain weights.

F.~Cobos, T.~K\"{u}hn and W.~Sickel~\cite[Theorem~3.1]{CoKuSi16} established necessary and sufficient condition on the weights
 $w(\bk)$, which guarantee the existence of compact embedding of $H^w(\T^d)$ in $L_{\infty}(\T^d)$, where
 $\|f\|_{L_{\infty}(\T^d)}= \operatorname{ess \, sup}_{\bx \in \T^d} |f(\bx)|$  (recall, that for the space $\ell_{\infty}(\T^d)$ we define the supremum norm).
Namely, it was proved that $H^w(\T^d) \hookrightarrow L_{\infty}(\T^d)$ compactly if and only if
\be\label{sum_1/w^2}
\sum_{\bk\in \Z^d} \frac{1}{(w(\bk))^2} < \infty \,.
\ee

Now we move to the concept of RKHS. Let us define the kernel
\be\label{kernel_w_periodic}
K_w(\bx, \by) = \sum_{ \bk\in \Z^d } \frac{ e^{i \bk \cdot (\bx - \by)}  }{ \left( w(\bk) \right)^2 }\,,
\ee
which is bounded under the condition (\ref{sum_1/w^2}).
The space $H^w(\T^d)$ is a reproducing kernel Hilbert space with the kernel (\ref{kernel_w_periodic}), and the boundedness of $K_w$
implies that $H^w(\T^d)$ is compactly embedded also into $\ell_{\infty}(\T^d)$. The eigenvalues of the operator $W_w$ are given by $\lambda_{\bk}:= (w(\bk))^{-2}$\,. Let us consider the frequency set
$$
  I(R):=\Big\{\bk \in \Z^d~:~ w(\bk) \leq R\Big\}
$$
and let $m(R)$ denote its cardinality. The reconstruction operator $S^{m(R)}_{\bX}$ is defined for the set of functions $\eta_{\bk}(\cdot) = \exp(i\bk\cdot )$ for $\bk \in I(R)$ by \eqref{algorithm_Sm}.
Note, that here we use the non-weighted least squares algorithm since
$\| \eta_k  \|_{\ell_\infty(D)} =1$,  $k\in \N$.

From Theorem~\ref{prob_l_infty_general} (Remark~\ref{remark_Non-weighted}) we get
 \begin{theorem}\label{gen_wce_l_infty_periodic}
	Let $H^w(\T^d)$ be a Sobolev type space given by weights $w(\bk)$ satisfying the condition
$$
\sum_{\bk\in \Z^d} \frac{1}{(w(\bk))^2}< \infty\,.
$$
Let further be $R\geq 1$, $r>1$ and $n$ be smallest such that
$$
	m(R) \leq \Big\lfloor \frac{n}{c_1 r\log n}\Big\rfloor \,,
 $$
 where $c_1>0$ is an absolute constant.
 Then the random reconstruction operator $S_{\bX}^{m(R)}$ satisfies
\be
\Prob \left( \sup\limits_{\|f\|_{H(K_w)}\leq 1}
\| f-  S_{\bX}^{m(R)} f\|^2_{L_{\infty}(\T^d)}
\leq C \sum\limits_{\bk:w(\bk)>R} \left( w(\bk)\right)^{-2} \right)
\geq 1- 3 n^{1-r}
\ee
with universal constant $C>0$.
\end{theorem}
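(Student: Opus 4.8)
The plan is to apply Theorem~\ref{prob_l_infty_general} specialized to the trigonometric eigensystem, exploiting its uniform boundedness to collapse the Christoffel factors. First I would record the spectral data of the embedding $\Id\colon H(K_w)\to L_2(\T^d)$: by the explicit form \eqref{kernel_w_periodic} the eigenfunctions of $W_w$ are the exponentials, so that $\eta_{\bk}(\bx)=\exp(i\bk\cdot\bx)$, the eigenvalues are $\lambda_{\bk}=(w(\bk))^{-2}$ and the singular numbers are $\sigma_{\bk}=(w(\bk))^{-1}$. The decisive feature is $\|\eta_{\bk}\|_{\ell_\infty(\T^d)}=1$ for every $\bk$, i.e. $(\eta_{\bk})$ is a uniformly bounded orthonormal system with $B=1$. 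By \eqref{N-T_BOS_eta} this gives $N_{K,\varrho_D}(m)\le m-1$, and by Remark~\ref{remark_Non-weighted} together with Remark~\ref{remark_constants} one may run the plain non-weighted least squares operator $S^{m(R)}_{\bX}$ on $\operatorname{span}\{\eta_{\bk}\colon \bk\in I(R)\}$ with the constant $c_1=10$ and failure probability $3n^{1-r}$. Choosing $n$ minimal with $m(R)\le\lfloor n/(c_1r\log n)\rfloor$ is exactly the budget condition \eqref{condition_m} that guarantees the spectral-norm bound \eqref{spectral_norm}.

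Next I would split the error as in \eqref{infty_general} into the bias $\|f-P_{m(R)}f\|_{\ell_\infty(\T^d)}$ and the variance $\|P_{m(R)}f-S^{m(R)}_{\bX}f\|_{\ell_\infty(\T^d)}$, where $P_{m(R)}$ projects onto $\operatorname{span}\{e^*_{\bk}\colon \bk\in I(R)\}$. For the bias the crucial gain over the general bound \eqref{op_norm_infty} is that $|\eta_{\bk}(\bx)|\equiv 1$, so the pointwise spectral sum is constant in $\bx$ and
\be
\sup_{\|f\|_{H(K_w)}\le 1}\|f-P_{m(R)}f\|^2_{\ell_\infty(\T^d)}=\sup_{\bx\in\T^d}\sum_{\bk\colon w(\bk)>R}|e^*_{\bk}(\bx)|^2=\sum_{\bk\colon w(\bk)>R}(w(\bk))^{-2}\,.
\ee
This already reproduces the clean tail on the right-hand side of the claim with no loss. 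For the variance I would rerun the chain \eqref{second_term_linfty}--\eqref{sum_c_tilde_final}: the prefactor is $\sqrt{N_{K,\varrho_D}(m(R)+1)}\le\sqrt{m(R)}$, while $\sum_{\bk}|\tilde c_{\bk}|^2$ is controlled, with high probability, by $(2/n)$ times the empirical energy of $f-P_{m(R)}f$, whose second-moment matrix is $\operatorname{diag}((w(\bk))^{-2})_{w(\bk)>R}$ and whose rows have squared length bounded by $M^2=\sum_{w(\bk)>R}(w(\bk))^{-2}$.

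Since $N_{K,\varrho_D}(m)/m\le 1$ and $N_{K,\varrho_D}(4k)/k<4$ throughout, every Christoffel factor in the two competing terms of \eqref{prob_l_infty_first_gen} degenerates to an absolute constant, so the whole bound collapses to a constant multiple of $\sum_{k\ge\lfloor m(R)/2\rfloor}\sigma_k^2$, and it remains to pass from ordered singular numbers back to frequencies: ordering $(w(\bk))^{-2}$ non-increasingly, the first $m(R)$ of them are precisely the indices in $I(R)$, so the tail equals $\sum_{w(\bk)>R}(w(\bk))^{-2}$. I expect the main obstacle to be exactly this bookkeeping step — reconciling the half-shifted summation index $\lfloor m(R)/2\rfloor$ produced by the variance estimate (through $m\sigma_m^2\le 2\sum_{k\ge\lfloor m/2\rfloor}\sigma_k^2$ as in Remark~\ref{remark_constants}) with the sharp tail $\sum_{k>m(R)}\sigma_k^2$ coming from the bias, and absorbing the off-by-one between $m(R)$ and $m(R)+1$. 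For the weights of interest (mixed smoothness) the singular numbers are regular enough that $\sum_{k\ge\lfloor m(R)/2\rfloor}\sigma_k^2\asymp\sum_{k>m(R)}\sigma_k^2$, so the two ranges agree up to the universal constant $C$; finally I would carry the probability $1-3n^{1-r}$ through unchanged from \eqref{spectral_norm}.
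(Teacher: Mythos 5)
Your route is the same as the paper's: Theorem~\ref{gen_wce_l_infty_periodic} is obtained there by a one-line appeal to Theorem~\ref{prob_l_infty_general} together with Remarks~\ref{remark_constants} and \ref{remark_Non-weighted}, using exactly the observations you make --- $\eta_{\bk}=\exp(i\bk\cdot)$, $\lambda_{\bk}=(w(\bk))^{-2}$, $\|\eta_{\bk}\|_{\ell_\infty}=1$, hence $N_{K,\varrho_D}(m)=m-1$ and the plain (non-weighted) least squares operator with $c_1=10$ and failure probability $3n^{1-r}$. Your exact evaluation of the bias, $\sup_{\bx}\sum_{w(\bk)>R}|e^*_{\bk}(\bx)|^2=\sum_{w(\bk)>R}(w(\bk))^{-2}$, is a clean (and sharper) shortcut past the dyadic estimate \eqref{op_norm_infty}, and your identification of $M^2$ and of the prefactor $\sqrt{N}$ in the variance chain is correct.

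The one place where you should not let yourself off the hook is precisely the ``obstacle'' you name and then dismiss. The variance estimate \eqref{sum_c_tilde_2}--\eqref{sum_c_tilde_final} unavoidably contains the term $\|\mathbf{\Lambda}\|_{2\to 2}=\sigma_{m(R)+1}^2$, which after multiplication by $N\approx m(R)$ produces $m(R)\,\sigma_{m(R)+1}^2$; this is absorbed only via $m\sigma_m^2\leq 2\sum_{k\geq\lfloor m/2\rfloor}\sigma_k^2$, so what the machinery actually delivers is a bound by $C\sum_{k\geq\lfloor m(R)/2\rfloor}\tau_k^2$, not by the sharp tail $\sum_{k>m(R)}\tau_k^2=\sum_{w(\bk)>R}(w(\bk))^{-2}$. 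For a general square-summable weight these two tails are \emph{not} comparable with a universal constant (take $\tau_k^2=2^{-k}$: the ratio grows like $m$), so your closing sentence ``for the weights of interest the singular numbers are regular enough'' proves a weaker statement than the one claimed, which is formulated for arbitrary admissible $w$ and a universal $C$. To be fair, this defect is inherited from the paper itself: its stated right-hand side also does not follow verbatim from Theorem~\ref{prob_l_infty_general}, and every downstream application (Theorems~\ref{asymp_Hsharp_infty}, \ref{preasymp_Hmix_sharp}) in fact works with the $\lfloor m/2\rfloor$-tail. So your proof matches the paper's in substance; the honest conclusion for both is the bound with the tail starting at $\lfloor m(R)/2\rfloor$, or equivalently the stated bound under an additional regularity (doubling-type) assumption on the rearranged weights.
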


In what follows, we consider concrete examples of weights $w(\bk)$, $\bk \in \Z^d$, and respective Sobolev classes of functions.
Note, that the study of different characteristics on these classes recently became of interest. It is due to the application of Sobolev classes of functions in a number of practical issues, as event modeling, quantum chemistry, signal reconstruction, etc.

\begin{theorem}\label{Th_g_m_Our_Lutz}
Under the assumptions of Theorem~\ref{gen_wce_l_infty_periodic}, the estimate
\be\label{g_m_Our_Lutz}
g_n(\operatorname{I}_w) \leq C \min\left\{ a_{  \lfloor n/ b\log  n   \rfloor} (\operatorname{I}_w)\,,
 \sqrt{\log n} \cdot a_{  \lfloor cn  \rfloor} (\operatorname{I}_w) \right\}
\ee
holds for the sampling numbers of the embedding
$\operatorname{I}_w \colon H^w(\T^d) \rightarrow L_{\infty}(\T^d)$, where $C,b,c >0$ are absolute constants. Note that
$$
  a_n (\operatorname{I}_w)^2 \leq \sum\limits_{k\geq n+1}\tau_k^2\,,
$$
where $(\tau_n)_n$ is the non-increasing rearrangement of $(1/w(\bk))_{\bk \in \Z^d}$\,.
\end{theorem}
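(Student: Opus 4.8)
The plan is to obtain Theorem~\ref{Th_g_m_Our_Lutz} as a direct specialization of Corollary~\ref{cor2} to the periodic kernel $K_w$ from \eqref{kernel_w_periodic}, the only genuine task being to verify the hypothesis $N_{K_w,\varrho_D}(k)=\mathcal{O}(k)$ and to check that all implied constants are absolute. First I would record the spectral data: for the kernel \eqref{kernel_w_periodic} the $L_2(\T^d)$-normalized eigenfunctions are the trigonometric monomials $\eta_{\bk}(\bx)=e^{i\bk\cdot\bx}$, and the singular numbers of $\operatorname{I}_w\colon H^w(\T^d)\to L_2(\T^d)$ are $\sigma_{\bk}=1/w(\bk)$; their non-increasing rearrangement is exactly $(\tau_n)_n$, so $\sigma_n=\tau_n$. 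Since $\|\eta_{\bk}\|_{\ell_\infty(\T^d)}=1$ for every $\bk$, the system is uniformly $\ell_\infty$-bounded with constant $B=1$, and \eqref{N-T_BOS_eta} yields
$$
N_{K_w,\varrho_D}(m)\le (m-1)\le m .
$$
Hence the hypothesis $N_{K_w,\varrho_D}(k)=\mathcal{O}(k)$ of Corollary~\ref{cor2} holds with implied constant $1$. Because $\varrho_D=(2\pi)^{-d}\dd\bx$ is a probability measure, the measure factor $\varrho_D(\T^d)=1$ in \eqref{cobos}, so the constant $C_{\varrho_D,K}$ produced by Corollary~\ref{cor2} is in fact absolute, and one obtains the claimed $\min$-estimate with absolute $C,b,c>0$.

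Next I would dispose of two cosmetic points. Since $H^w(\T^d)$ consists of continuous functions and the recovery operator $S_{\bX}^{m}$ outputs a trigonometric polynomial, the difference $f-S_{\bX}^{m}f$ is continuous, so its $\ell_\infty(\T^d)$- and $L_\infty(\T^d)$-norms coincide; thus the bound for the $\ell_\infty$-embedding of Corollary~\ref{cor2} transfers verbatim to the embedding into $L_\infty(\T^d)$. For the additional inequality on approximation numbers I would use the computation preceding \eqref{op_norm_infty}, namely $\|\operatorname{I}_w-P_{m}\|_{K_w,\infty}^2=\sup_{\bx\in\T^d}\sum_{k\ge m+1}|e_k^*(\bx)|^2$; writing $e_k^*(\bx)=\sigma_k\eta_k(\bx)$ and using $|\eta_k|\equiv 1$, this collapses to the tail sum $\sum_{k\ge m+1}\sigma_k^2$. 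Taking the rank-$n$ projection $P_n$ as a competitor then gives
$$
a_n(\operatorname{I}_w)^2\le \|\operatorname{I}_w-P_n\|_{K_w,\infty}^2=\sum_{k\ge n+1}\sigma_k^2=\sum_{k\ge n+1}\tau_k^2 ,
$$
as stated.

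I expect no serious analytic obstacle: the argument is essentially bookkeeping layered on top of Corollary~\ref{cor2}. The only points requiring care are (a) confirming that the implied constant in $N_{K_w,\varrho_D}(k)=\mathcal{O}(k)$ and the factor $\varrho_D(\T^d)$ are both absolute, so that $C,b,c$ depend neither on $w$ nor on $d$; and (b) the harmless off-by-one in the indexing of the projection used for the $a_n$-estimate, which is fully controlled by the uniform boundedness $|\eta_{\bk}|\equiv 1$ that makes the $L_\infty$ operator norm of $\operatorname{I}_w-P_m$ \emph{exactly} equal to the tail sum of squared singular numbers.
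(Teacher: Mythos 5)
Your proposal is correct and follows exactly the route of the paper's own (two-line) proof: observe that the eigenfunctions are the complex exponentials regardless of $w$, so $N_{K_w,\varrho_D}(m)=m-1\leq m$, and then invoke Corollary~\ref{cor2}. The extra bookkeeping you supply (the absolute constants via $\varrho_D(\T^d)=1$, the $\ell_\infty$ versus $L_\infty$ identification, and the projection bound $a_n(\operatorname{I}_w)^2\leq\sum_{k\geq n+1}\tau_k^2$, modulo the harmless off-by-one you already flag) is consistent with what the paper leaves implicit.
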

\begin{proof}
The relation (\ref{g_m_Our_Lutz}) follows from Corollary~\ref{cor2} and the fact that $N_{w}(m) = N(m) = m-1 \leq m$. Note that
the eigenfunctions $\eta_{\bk}(\cdot)$ are always $\exp(i\bk\cdot)$ no matter what $w(\cdot)$ is.
\end{proof}

\begin{remark}
Theorem~\ref{Th_g_m_Our_Lutz} improves the recent estimate by L.~K\"{a}mmerer~\cite[Theorem~4.11]{Kam19}. There it was shown
\be\label{result_Lutz}
g_m (\operatorname{I}_w) \leq C \log m \cdot a_{  \lfloor m/ \log m  \rfloor} (\operatorname{I}_w )
\ee
under some additional conditions, that we do not need for the relation (\ref{g_m_Our_Lutz}).  Note also, that (\ref{result_Lutz})
is an improvement of the result by L.~K\"{a}mmerer and T.~Volkmer~\cite{Kam_Vol19}.
\end{remark}

\subsection{Sobolev Hilbert spaces with mixed smoothness}

Let us specify the weight sequence $(w(\bk))_{\bk}$ as follows. First, let us define $w_s^{\#}(\bk) := \prod_{j=1}^d (1+  |k_j|)^{s}$\,, $\bk\in \Z^d $,  $s > 1/2$.
By $H^{s, \#}_{\mix}(\T^d)$ we denote the periodic Sobolev space (see, e.g., \cite{KuSiUl15})
\be\label{periodic_H_sharp_norm}
 H^{s, \#}_{\mix}(\T^d) = \left\{ f\in L_2(\T^d) \colon \quad \|f\|_{H^{s, \#}_{\mix}(\T^d)} =
 \left(  \sum_{ \bk\in \Z^d }  | c_{\bk}(f) |^2  \prod_{j=1}^d (1+  |k_j|)^{2s} \right)^{1/2}  < \infty \right\}.
\ee

One could also choose the weight of the form
$w_s^{+}(\bk) := \prod_{j=1}^d \left(1+  |k_j|^2\right)^{s/2}$, $\bk\in \Z^d $, $s > 1/2$.
The corresponding classes are denoted by $H^{s, +}_{\mix}(\T^d)$, i.e.,
$$
H^{s, +}_{\mix}(\T^d) := \Bigg\{ f\in \ltwo \colon \quad
\| f \|_{ H^{s, +}_{\mix}(\T^d) } =
\Bigg( \sum_{ \bk\in \Z^d }  |  c_{\bk}(f) |^2 \prod_{j=1}^d \left(1+  |k_j|^2 \right)^s \Bigg)^{1/2}
 < \infty \Bigg\}.
$$
The norms of the classes $H^{s, \#}_{\mix}(\T^d)$ and $H^{s, +}_{\mix}(\T^d)$ are equivalent (up to constants that can depend on the dimension $d$ and the smoothness parameter $s$).
Therefore, an asymptotic behavior of sampling numbers of these classes coincide.
When considering asymptotic issues let us write $H^{s}_{\mix}(\T^d)$, which is a common notation for both introduced classes. For singular numbers of the embedding operator $\Id\colon H^{s}_{\mix}(\T^d) \rightarrow L_2(\T^d)$,
 it is known that
\be\label{sigma_star}
 \sigma_n \lesssim_{s,d} n^{-s} (\log n)^{s(d-1)} \,.
\ee

\begin{theorem}\label{asymp_Hsharp_infty}
	Let $r>1$, $m= \Big\lfloor \frac{n}{c_1 r\log n}\Big\rfloor$\,,
 where $c_1>0$ is an absolute constant.
	Then
	\be\label{asymptotic_Hsmix}
	\Prob \left( \sup_{ \| f \|_{ H^{s}_{\mix}(\T^d)} \leq 1}  \| f - S_{\bX}^m f \|_{ L_{\infty}(\T^d) }
	\lesssim_{s,d,r}    n^{-s+1/2}   \left(  \log n \right)^{sd-1/2}  \right)
	\geq 1- 3 n^{1-r}
	\ee
is true as $n \rightarrow \infty$.
\end{theorem}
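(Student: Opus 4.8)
The plan is to apply Theorem~\ref{prob_l_infty_general} directly to the reproducing kernel Hilbert space $H^s_{\mix}(\T^d)$ with the normalized Lebesgue measure $\varrho_D=(2\pi)^{-d}\,\dd\bx$. For $s>1/2$ the summability condition \eqref{sum_1/w^2} holds, since $\sum_{\bk\in\Z^d}(w_s^{\#}(\bk))^{-2}=\prod_{j=1}^{d}\sum_{k_j\in\Z}(1+|k_j|)^{-2s}<\infty$, so the kernel is bounded and the general framework applies. Because the eigenfunctions are $\eta_{\bk}(\cdot)=\exp(i\bk\cdot)$ with $\|\eta_{\bk}\|_{\ell_\infty(\T^d)}=1$, the system is uniformly bounded with $B=1$, hence by \eqref{N-T_BOS_eta} we have $N_{K,\varrho_D}(k)\le k-1\le k$. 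As recorded in Remark~\ref{remark_constants}, uniform boundedness also lets us replace the weighted operator by the plain least squares operator $S_{\bX}^m$ and take $c_2=3$, which yields the probability $1-3n^{1-r}$; moreover, for continuous functions on $\T^d$ the norms $\|\cdot\|_{L_\infty(\T^d)}$ and $\|\cdot\|_{\ell_\infty(\T^d)}$ coincide.

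Next I would insert $N_{K,\varrho_D}(k)\lesssim k$ and the singular-value decay $\sigma_k^2\lesssim_{s,d}k^{-2s}(\log k)^{2s(d-1)}$ from \eqref{sigma_star} into the two competing terms of the maximum in \eqref{prob_l_infty_first_gen}. In the first term the prefactor $N_{K,\varrho_D}(m)/m$ is $\lesssim 1$, and in the second the prefactor $N_{K,\varrho_D}(4k)/k$ is $\lesssim 1$, so both terms reduce to a constant multiple of the single tail sum $\sum_{k\ge\lfloor m/2\rfloor}k^{-2s}(\log k)^{2s(d-1)}$. Consequently the squared worst-case error is, with the stated probability, bounded by a constant (depending on $s,d$) times that tail sum.

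The technical core is to estimate this tail. Since $s>1/2$ gives $2s>1$, the summand $t\mapsto t^{-2s}(\log t)^{2s(d-1)}$ is eventually decreasing, so the sum is comparable to $\int_{m/2}^{\infty}t^{-2s}(\log t)^{2s(d-1)}\,\dd t$. The substitution $t=e^{u}$ turns this into an incomplete-gamma-type integral whose leading asymptotics give
\[
\sum_{k\ge\lfloor m/2\rfloor}k^{-2s}(\log k)^{2s(d-1)}\;\lesssim_{s,d}\;m^{-2s+1}(\log m)^{2s(d-1)}\,,
\]
and taking square roots yields the error bound $m^{-s+1/2}(\log m)^{s(d-1)}$. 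I expect the careful bookkeeping of the slowly varying logarithmic factor here -- verifying that it contributes exactly the power $(\log m)^{2s(d-1)}$ rather than being absorbed into or altering the polynomial exponent -- to be the only real obstacle; the rest is mechanical.

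Finally I would pass from $m$ to $n$ via $m=\lfloor n/(c_1r\log n)\rfloor\asymp_{r}n/\log n$, so that $\log m\asymp\log n$ and, since $-s+1/2<0$,
\[
m^{-s+1/2}\;\asymp_{s,r}\;\bigl(n/\log n\bigr)^{-s+1/2}\;=\;n^{-s+1/2}(\log n)^{s-1/2}\,.
\]
Multiplying by $(\log m)^{s(d-1)}\asymp(\log n)^{s(d-1)}$ and adding the logarithmic exponents, $(s-1/2)+s(d-1)=sd-1/2$, delivers the claimed rate $n^{-s+1/2}(\log n)^{sd-1/2}$ with probability at least $1-3n^{1-r}$, as desired.
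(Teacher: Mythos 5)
Your proposal is correct and follows essentially the same route as the paper: apply the general least squares theorem with $N_{K,\varrho_D}(k)\le k$ for the uniformly bounded trigonometric system, bound the tail sum $\sum_{k\ge\lfloor m/2\rfloor}k^{-2s}(\log k)^{2s(d-1)}$ by $m^{-2s+1}(\log m)^{2s(d-1)}$, and substitute $m\asymp n/(r\log n)$ to collect the exponent $sd-1/2$ on the logarithm. The paper routes the application through its Theorem~\ref{gen_wce_l_infty_periodic} (the non-weighted periodic specialization) rather than invoking Theorem~\ref{prob_l_infty_general} directly, but this is only a packaging difference.
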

\begin{proof}
The choice of $m \in \N$ yields
 $m^{-1}   \leq C_1  \frac{r \log n}{n}$\,, $C_1>0$, hence, from \eqref{sigma_star} we get
\be\label{sigma_Hsharp_asymp}
\sigma_{m } \lesssim_{s,d} r^{s} n^{-s}  \left(  \log n \right)^{sd}\,,
\ee
and therefore
\begin{align*}
	 \sum_{k\geq \lfloor m/2 \rfloor  }    \sigma_k^2
	&  \lesssim_{s,d} r^{2s}
	\sum_{k\geq \lfloor m/2 \rfloor  }  k^{-2s} (\log k)^{2s(d-1)}
 \leq  r^{2s} (m/2)^{-2s+1}
	\left( \log (m/2) \right)^{2s(d-1)}
\\
	& \lesssim_{s}  r^{2s-1}  n^{-2s+1}   \left(  \log n \right)^{2s-1}
	\left(  \log n \right)^{2s(d-1)}
 =	 r^{2s-1}  n^{-2s+1}   \left(  \log n \right)^{2sd-1}\,.
\label{sum_sigma_Hsharp_asymp}
	\end{align*}

Respectively, from Theorem~\ref{gen_wce_l_infty_periodic} we get the estimate (\ref{asymptotic_Hsmix}).
\end{proof}

\begin{theorem}\label{Stat_i_ii_sampling_per} For $H^{s}_{\mix}(\T^d)$, $s>1/2$, it holds

{\em (i)}  There is an absolute constant $b>0$ such that
$$
g_{\lfloor bm\log m \rfloor}( \Id\colon H^{s}_{\mix}(\T^d)  \rightarrow L_{\infty}(\T^d)  ) \lesssim_{s,d}
m^{-s+ 1/2} (\log m)^{s(d-1)}\,.
$$

{\em (ii)} There is an absolute constant $c>0$ such that
\be\label{asymptotic_Hsmix_improved}
g_{\lfloor cn \rfloor}( \Id\colon H^{s}_{\mix}(\T^d)) \rightarrow L_{\infty}(\T^d) ) \lesssim_{s,d}
n^{-s+ 1/2} (\log n)^{s(d-1) +1/2}\,.
\ee
\end{theorem}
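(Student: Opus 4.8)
The plan is to apply Theorem~\ref{main_samp} directly, exploiting the two structural facts special to this setting. The eigenfunctions of the embedding $\Id\colon H^s_{\mix}(\T^d)\to L_2(\T^d)$ are the trigonometric monomials $\eta_{\bk}(\cdot)=\exp(i\bk\cdot)$, which are uniformly bounded by $1$, and the singular values obey \eqref{sigma_star}. The first fact gives, via \eqref{N-T_BOS_eta} (equivalently, as noted in the proof of Theorem~\ref{Th_g_m_Our_Lutz}), the sharp Christoffel bound $N_{K,\varrho_D}(m)=m-1\le m$, using normalized Lebesgue measure as $\varrho_D$ and the non-weighted least squares variant. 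Consequently $N_{K,\varrho_D}(m)/m\le 1$ and $N_{K,\varrho_D}(4k)/k\le 4$, so that both entries of the maximum in each part of Theorem~\ref{main_samp} collapse, up to an absolute constant, to a single tail sum $\sum_k\sigma_k^2$.

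For part (i), I would insert $N_{K,\varrho_D}(m)\le m$ into Theorem~\ref{main_samp}(i) to obtain
$$
g_{\lfloor bm\log m\rfloor}(\Id)^2 \;\le\; 4c_3\sum_{k\ge\lfloor m/2\rfloor}\sigma_k^2\,.
$$
The remaining task is the tail estimate already carried out in the proof of Theorem~\ref{asymp_Hsharp_infty}: using $\sigma_k\lesssim_{s,d}k^{-s}(\log k)^{s(d-1)}$ and comparing with $\int_{m/2}^\infty t^{-2s}(\log t)^{2s(d-1)}\dd t$ (legitimate precisely because $2s>1$), one gets
$$
\sum_{k\ge\lfloor m/2\rfloor}\sigma_k^2 \;\lesssim_{s,d}\; m^{-2s+1}(\log m)^{2s(d-1)}\,.
$$
Taking square roots yields $g_{\lfloor bm\log m\rfloor}\lesssim_{s,d} m^{-s+1/2}(\log m)^{s(d-1)}$, which is the claim.

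For part (ii), I would proceed identically but start from Theorem~\ref{main_samp}(ii), whose only differences are the extra factor $\log m$ in the first entry of the maximum and the shifted cut-off $\lfloor c_5 m\rfloor\asymp m$. The same tail estimate gives $\sum_{k\ge\lfloor c_5 m\rfloor}\sigma_k^2\lesssim_{s,d} m^{-2s+1}(\log m)^{2s(d-1)}$, and multiplying by the surviving $\log m$ produces $m^{-2s+1}(\log m)^{2s(d-1)+1}$, whose square root is $m^{-s+1/2}(\log m)^{s(d-1)+1/2}$. Since the sub-sampling step underlying Theorem~\ref{main_samp}(ii) realises this with a linear budget $\#J\le cm$ of nodes, monotonicity of $g_n$ in $n$ lets me relabel $m$ as $n$ and read off the bound at index $\lfloor cn\rfloor$, delivering \eqref{asymptotic_Hsmix_improved}.

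The computations here are routine; the only point requiring genuine care is the tail sum with the logarithmic weight, where I must verify that the integral comparison preserves the power $(\log)^{2s(d-1)}$ and that the clean $m^{-2s+1}$ decay relies on the standing hypothesis $s>1/2$. The real content is not in this proof but in Theorem~\ref{main_samp} itself, whose part (ii) I would invoke as a black box; the gain of (ii) over (i)---trading the $m\log m$ budget for a linear one at the cost of a single extra $\sqrt{\log}$---is exactly the payoff of the Weaver-type sub-sampling.
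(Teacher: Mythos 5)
Your proposal is correct and follows essentially the same route as the paper: part (i) is the tail-sum computation from the proof of Theorem~\ref{asymp_Hsharp_infty} (equivalently, Theorem~\ref{main_samp}(i) with the trigonometric Christoffel bound $N(m)=m-1$), and part (ii) is Theorem~\ref{main_samp}(ii)/Corollary~\ref{cor2} with the same tail estimate and the extra $\sqrt{\log}$ from the sub-sampled budget. The paper's proof is just more terse, deferring both computations to the earlier statements you invoke explicitly.
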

\begin{proof}
The statement (i) follows immediately from Theorem~\ref{asymp_Hsharp_infty}.

To get (ii), we use instead of Theorem~\ref{gen_wce_l_infty_periodic} the improved bound for sampling numbers
from Section~\ref{section_improved}, namely, the statement (ii) of Theorem~\ref{main_samp} (Corollary~\ref{cor2}).
\end{proof}

\begin{remark}
Comparing the estimates (i) and (ii) from Theorem~\ref{Stat_i_ii_sampling_per},
 we see a reduced exponent in the logarithmic term of (ii) for  $s>1$.
\end{remark}

\begin{remark}
V.N.~Temlyakov~\cite{Te93} proved the following estimate for sparse grids:
$$
 g_n(\Id\colon H^{s}_{\mix}(\T^d) \rightarrow L_{\infty}(\T^d) ) \asymp_{s,d} n^{-s+1/2} (\log n)^{(d-1)s}\,, \quad s>1/2.
$$
The estimate of Theorem~\ref{Stat_i_ii_sampling_per} is a little worse if we look at the power of the logarithm. Let us comment on the preasymptotic estimates which give us the precise constants for sufficiently smooth function classes with $\#$-norm. Similar results can be given for the ``$+$''-norm, where we use \eqref{f30b}.

Note, that we can use the plain (non-weighted) least squares algorithm here.
 This is why we  have a slightly better $m$, $n$ scaling here.
\end{remark}

\begin{theorem}\label{preasymp_Hmix_sharp}
	 Let $s>\frac{1+\log_2 d}{2}$, $\beta:= 2s/(1+\log_2 d) >1$ and $r>1$. For $n \in \N$, $n \geq 3$, we define $m \in \N$ by
$$
	 m=\Bigl\lfloor \frac{n}{10r\log n }  \Bigr\rfloor .
$$
	Then
\be\label{th_Hmix_sharp__preasymp}
\Prob \left( \sup_{\|f   \|_{H^{s, \#}_{\mix}(\T^d)} \leq 1}  \| f - S_{\bX}^m f \|^2_{L_{\infty}(\T^d)}
\leq 1612 \left( \frac{16}{3} \right)^{\beta}  \frac{\beta}{\beta-1} \left( \frac{m}{2} -1 \right)^{-\beta +1}
\right)
\geq 1-3 n^{1-r}.
\ee	
\end{theorem}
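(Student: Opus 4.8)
The plan is to specialize the general guarantee of Theorem~\ref{prob_l_infty_general} to the trigonometric system and then carry out a sharp, constant-explicit estimate of the tail $\sum_{k \ge \lfloor m/2\rfloor}\sigma_k^2$ of the squared singular numbers. First I would reduce to this tail sum. For $H^{s, \#}_{\mix}(\T^d)$ the eigenfunctions are $\eta_{\bk}(\bx)=e^{i\bk\cdot\bx}$, which form a uniformly $\ell_\infty$-bounded ONS with $\|\eta_{\bk}\|_\infty=1$; hence $N_{K,\varrho_D}(m)=m-1$ exactly and $N_{K,\varrho_D}(4k)=4k-1$. This lets me run the \emph{non-weighted} least squares operator $S_{\bX}^m$ (Remark~\ref{remark_Non-weighted}) with the admissible constants $c_1=10$, $c_3=403$ recorded in Remark~\ref{remark_constants}, and the budget $m=\lfloor n/(10r\log n)\rfloor$ matches the hypothesis. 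Since $\frac{m-1}{m}<1$ and $\sum_{k\ge\lfloor m/2\rfloor}\frac{4k-1}{k}\sigma_k^2<4\sum_{k\ge\lfloor m/2\rfloor}\sigma_k^2$, the maximum in \eqref{prob_l_infty_first_gen} is at most $4\sum_{k\ge\lfloor m/2\rfloor}\sigma_k^2$, so with probability at least $1-3n^{1-r}$ one gets $\sup_{\|f\|\le1}\|f-S_{\bX}^m f\|_{L_\infty(\T^d)}^2\le 1612\sum_{k\ge\lfloor m/2\rfloor}\sigma_k^2$, where $1612=4\cdot 403$.

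The heart of the argument is a preasymptotic bound on this tail. Here $\sigma_{\bk}^2=\prod_{j=1}^d(1+|k_j|)^{-2s}$, and I would exploit the product structure through a monotonicity/Hölder interpolation with the tuned exponent $\theta=1/\beta$, chosen precisely so that $2s\theta=1+\log_2 d$. Because $(\sigma_k^{2\theta})_k$ is non-increasing, $n\,\sigma_n^{2\theta}\le\sum_{k=1}^n\sigma_k^{2\theta}\le\sum_{\bk\in\Z^d}\prod_{j=1}^d(1+|k_j|)^{-(1+\log_2 d)}=S_1^d$, where $S_1:=\sum_{k\in\Z}(1+|k|)^{-(1+\log_2 d)}$ and the factorization into a $d$-fold product uses the product form of the weight. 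Raising to the power $1/\theta=\beta$ yields $\sigma_n^2\le (S_1^d)^\beta\,n^{-\beta}$. The condition $s>(1+\log_2 d)/2$, i.e.\ $\beta>1$, is exactly what makes $\theta\in(0,1)$ and what renders the subsequent tail integrable.

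It then remains to assemble the constant. I would prove the key numerical estimate $S_1^d\le 16/3$ for every $d\ge2$, giving $\sigma_n^2\le(16/3)^\beta n^{-\beta}$, and combine it with the elementary comparison $\sum_{k\ge M}k^{-\beta}\le M^{-\beta}+\int_M^\infty t^{-\beta}\,\dd t\le\frac{\beta}{\beta-1}M^{1-\beta}$ applied with $M=\lfloor m/2\rfloor$. Using $\lfloor m/2\rfloor\ge m/2-1$ and monotonicity of $x\mapsto x^{1-\beta}$ (since $1-\beta<0$) replaces $M^{1-\beta}$ by $(m/2-1)^{1-\beta}$; multiplying by the factor $1612$ from the first step then produces exactly the claimed bound.

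The main obstacle is the sharp constant $S_1^d\le 16/3$. The inequality is essentially tight at $d=2$, where $S_1=\pi^2/3-1$ gives $S_1^2=(\pi^2/3-1)^2\approx 5.24$ against $16/3\approx 5.33$, so crude integral estimates of $\sum_{j\ge2}j^{-(1+\log_2 d)}$ overshoot. I would therefore treat $d=2$ via the exact value of $\zeta(2)$ and handle $d\ge3$ by showing that $d\mapsto S_1(d)^d$ decreases: the rapid growth of the exponent $1+\log_2 d$ forces $S_1(d)\to 1$ fast enough to beat the outer power $d$. This monotonicity is the only genuinely delicate point; the remaining steps are the routine reduction and integral comparison described above.
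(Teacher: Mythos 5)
Your proposal is correct and follows the same skeleton as the paper's proof: specialize Theorem~\ref{prob_l_infty_general} to the trigonometric system (where $N(m)=m-1$, so the plain least squares operator with $c_1=10$, $c_3=403$ from Remark~\ref{remark_constants} applies and the maximum collapses to $4\sum_{k\ge\lfloor m/2\rfloor}\sigma_k^2$, giving the factor $1612$), then bound the tail via $\sigma_k^2\le(16/(3k))^{\beta}$ and the integral comparison $\sum_{k\ge M}k^{-\beta}\le\frac{\beta}{\beta-1}M^{1-\beta}$ with $M=\lfloor m/2\rfloor\ge m/2-1$. The one place you diverge is the key ingredient $\sigma_n^{\#}\le(16/(3n))^{s/(1+\log_2 d)}$: the paper simply cites K\"uhn \cite[Theorem~4.1]{Ku19}, whereas you re-derive it from scratch via the rearrangement/H\"older trick $n\sigma_n^{2\theta}\le\sum_{\bk\in\Z^d}\prod_j(1+|k_j|)^{-2s\theta}=S_1^d$ with $\theta=1/\beta$, reducing everything to the numerical claim $S_1(d)^d\le 16/3$ for all $d\ge2$. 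That derivation is structurally sound (the rearrangement argument is valid and the factorization is just Tonelli), and it buys a self-contained proof valid for all $n\ge1$ rather than $n\ge6$; but the claim $S_1(d)^d\le 16/3$, which you correctly identify as tight at $d=2$ and delicate for the monotonicity in $d$, is asserted rather than proved, so as written your argument still leans on an unverified numerical lemma exactly where the paper leans on a citation. Either close that gap or cite \cite{Ku19} as the paper does.
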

\begin{proof}
In \cite[Theorem 4.1]{Ku19} it was established, that
	$$
	\sigma_n^{ \#} \leq \left(  \frac{16}{3n} \right)^{\frac{s}{1+\log_2 d}}\,, \quad n\geq 6.
	$$	
We obtain
	\begin{align}
	 \sum_{k\geq  \lfloor m/2 \rfloor}  \left( \sigma_k^{\#}  \right)^2
	&\leq   \left( \frac{16}{3} \right)^{\beta}
	\sum_{k \geq    \lfloor m/2 \rfloor}	k^{-\beta}
\leq   \left( \frac{16}{3} \right)^{\beta}
	\left(  \lfloor m/2 \rfloor ^{-\beta}
	+ \frac{1}{\beta-1}  \lfloor m/2 \rfloor^{-\beta +1} \right)
\nonumber    \\
    & \leq   \left( \frac{16}{3} \right)^{\beta} \frac{\beta}{\beta-1}
       \lfloor m/2 \rfloor^{-\beta+1} \leq \left( \frac{16}{3} \right)^{\beta} \frac{\beta}{\beta-1} \left( \frac{m}{2} -1 \right)^{-\beta +1}.
     \label{log_sum_per_ex1}
	\end{align}
Hence, from Theorem~\ref{prob_l_infty_general} (see Remark~\ref{remark_constants}), we have with high probability that
$$
\sup_{\|f   \|_{H^{s, \#}_{\mix}(\T^d)} \leq 1}  \| f - S_{\bX}^m f \|^2_{L_{\infty}(\T^d)} \leq
403 \max\Bigg\{ \sum\limits_{k\geq \lfloor m/2 \rfloor}\sigma_k^2, \ \
4 \sum\limits_{k\geq \lfloor m/2 \rfloor}\sigma_k^2 \Bigg\}\,,
$$
which, in combination with (\ref{log_sum_per_ex1}), yields the estimate (\ref{th_Hmix_sharp__preasymp}).
\end{proof}

\subsection{A univariate example}

\label{example_Bernardi}
Theorems~\ref{prob_l_infty_general} and \ref{main_samp} can be applied to non-bounded orthonormal systems. There is a large source of examples already for the univariate case stemming from second order differential operators in connection with orthogonal polynomials, see for instance \cite[P. 108, Lem.\ 5]{Ne79}. Here the growth of the Christoffel function for the space of polynomials of degree $n$ for different Jacobi weight parameters $\alpha,\beta>-1$ is given. It turns that if $\alpha = \beta \leq -1/2$ we have $N(n) =  \mathcal{O}(n)$ otherwise not.  We focus on Legendre polynomials ($\alpha = \beta = 0$) and the corresponding second order differential operator. Here we consider $D = [-1,1]$ together with the uniform measure $\rm d x$ on $D$. The second order operator $A$, that is defined by
$$
Af(x) = - ((1-x^2)v')' \,,
$$
characterizes for $s>1$ weighted Sobolev spaces
$$
H(K_s) := \{ f\in L_2(D)\colon \  A^{s/2}f \in L_2(D) \}\,.
$$
These spaces are RKHS with the kernel
$$
K_s(x,y) = \sum_{k\in \mathbb{N}} (1+ (k(k+1))^s )^{-1} \mathcal{P}_k(x)  \mathcal{P}_k(y)\,,
$$
where $\mathcal{P}_k\colon D \rightarrow \R$, $k \in \N$, are
 $L_2(D)$-normalized Legendre polynomials $\mathcal{P}_k(x)$.

 In this setting, we have $( \eta_k )_{k=1}^{\infty} = (\mathcal{P}_k)_{k=1}^{\infty}$\,,
$(e^*_k)_{k=1}^{\infty} = ((1+ (k(k+1))^s )^{-1/2} \mathcal{P}_k)_{k=1}^{\infty}$\,, and, accordingly,
$\sigma_k = ((1+ (k(k+1))^s )^{-1/2}$. As for the spectral function, we get
 $$
 N(m) = \sup_{x\in D} \sum_{k=1}^{m-1} |\mathcal{P}_k (x) |^2 =
 \sum_{k=0}^{m-2} \frac{2k+1}{2} = \frac{(m-1)^2}{2}\,.
 $$
 Hence,
 \begin{gather*}
 \sum\limits_{k\geq \lfloor m/2 \rfloor}\sigma_k^2 =  \sum\limits_{k\geq \lfloor m/2 \rfloor} ((1+ (k(k+1))^s )^{-1}
 \lesssim \sum\limits_{k\geq \lfloor m/2 \rfloor} k^{-2s} \lesssim_s   m^{-2s+1}
 \\
 \sum\limits_{k\geq \lfloor m/2 \rfloor}\frac{N_{K,\varrho_D}(4k)\sigma_k^2}{k}
 \lesssim \sum\limits_{k\geq \lfloor m/2 \rfloor} k k^{-2s} \lesssim_s   m^{-2s+2}\,,
\end{gather*}
\be\label{est_univariate_Th3.1}
\sup\limits_{\|f\|_{H(K_s)}\leq 1}\|f-  \widetilde{S}_{\bX}^m f\|_{L_{\infty}(D)}\lesssim_s   m^{-s+1} \lesssim n^{-s+1} (\log n)^{s-1}\,.
\ee
Applying the improved bounds for sampling numbers from Theorem~\ref{main_samp}, we derive to
the  estimate
\begin{equation}\label{eq101}
g_{n}( \Id) \lesssim_s n^{-s+1}(\log n)^{\min\{s-1,1/2\}}\,.
\end{equation}
Note, that Ch.~Bernardi and Y.~Maday~\cite[Theorem~6.2]{Bernardi_Maday1992} got optimal in the main rate estimates for approximation by a polynomial operator at Gauss points in the space $L_2(D)$. We are not aware of any existing $L_\infty(D)$ bounds in the literature. In the paper by L.~K\"ammerer, T.~Ullrich and T.~Volkmer~\citep{KUV19}, the authors obtained improved worst case error estimates with high probability in the space $L_2(D)$ (see Examples~5.7 and 5.10 there).
\begin{remark}\label{rem_bern}
Note, that using the density function $\varrho_m$ from Remark~\ref{remark_other_density},
we have the less sharp bound
\begin{equation}\label{f100}
\sup\limits_{\|f\|_{H(K_s)}\leq 1}\|f-  \widetilde{S}_{\bX}^m f\|_{L_{\infty}(D)}\lesssim_s
\sqrt{m \cdot  m^{-2s+2}} \lesssim n^{-s+3/2} (\log n)^{s-3/2} \, .
\end{equation}
Without the weighted least squares (see Remark~\ref{remark_Non-weighted}), we obtain
$$
\sup\limits_{\|f\|_{H(K_s)}\leq 1}\|f-  S_{\bX}^m f\|_{L_{\infty}(D)}\lesssim_s
m^{-s+1}
$$
that is the same in order with respect to the parameter $m$,  as those in (\ref{est_univariate_Th3.1}). But we need to impose the condition
$N(m) \sim m^2 \leq c  \, n/ \log n$, i.e.,  $m \leq C \, \sqrt{n/ \log n}$. Hence, in terms of
the number $n$ of used samples we get
\begin{equation}\label{eq100}
\sup\limits_{\|f\|_{H(K_s)}\leq 1}\|f-  S_{\bX}^m f\|_{L_{\infty}(D)}\lesssim_s
n^{-(s-1)/2} (\log n)^{ (s-1)/2 } \,,
\end{equation}
which sometimes even better than \eqref{f100}, namely if $1<s<2$. However, \eqref{f100} and \eqref{eq100} are never better than the right-hand side in \eqref{eq101}.
\end{remark}

\subsection*{Appendix. Preasymptotic estimates for singular numbers.}

Note, that the asymptotic rate of the singular numbers $\sigma_n(\Id\colon H^{s}_{\mix}(\T^d) \rightarrow L_2(\T^d))$ as $n \to \infty$
is known for a long time in many cases. So, for the periodic Sobolev classes $H^{s}_{\mix}(\T^d)$ of functions with dominating mixed smoothness the order of
approximation numbers $\sigma_n)$ was obtained by B.S.~Mityagin in 1962 (in a slightly general setting).
As for the isotropic classes $H^{s}(\T^d)$ the order of corresponding approximation numbers was proved by J.W.~Jerome in 1967. A lot of scientists were involved in finding the optimal orders of convergence for linear algorithms on different function classes, but mainly the constants in the estimates were not specified.
And in practical issues the dependence of these constants on the smoothness parameter of the spaces and the dimension of the underlying domain
is a crucial point.

 Various comments on literature concerning the existing preasymptotic results for mixed order
  Sobolev functions on the $d$-torus can be found in~\cite[Chapt.~4.5]{KuSiUl15}.
We comment only few that are closely connected to our research.

It was shown by T.~K\"{u}hn, W.~Sickel and T.~Ullrich~\cite{KuSiUl15} that for  $s>0$ and $d \in \N$ it holds
$$
\lim\limits_{n\to \infty} \frac{n^s \sigma_n (\Id\colon H^{s}_{\mix}(\T^d) \rightarrow L_2(\T^d)) }{(\log n)^{s(d-1)}} =
\left( \frac{ 2^d }{ (d-1)! } \right)^s
$$
(they showed an existence of the limit and that its value is independent of the norm choice).
The preasymptotic estimates (for small $n$) depend heavily on the choice of corresponding norm in the space $H^{s}_{\mix}(\T^d)$.
Therefore, authors further considered several natural norms ($\#, *$ and $+$) in the space $H^{s}_{\mix}(\T^d)$, in particular they showed that
for the classes $H^{s, \#}_{\mix}(\T^d)$ defined exactly as in (\ref{periodic_H_sharp_norm}) it holds for $s>0$, $d \in \N$, $d \geq 2$ and $1\leq n \leq 4^d$
the estimate
$$
\sigma_n (\Id\colon H^{s, \#}_{\mix}(\T^d) \rightarrow L_2(\T^d)) \leq \left( \frac{e^2}{n} \right)^{\frac{s}{2+ \log_2 d}}
$$
is true (with the corresponding non-matching lower bound).

This result was improved by T.~K\"{u}hn~\cite{Ku19} where he showed that for $s>0$, $d \in \N$ and all $n \geq 6$ it holds
\be\label{Kuhn_result_an}
\sigma_n (\Id\colon H^{s, \#}_{\mix}(\T^d) \rightarrow L_2(\T^d)) \leq \left( \frac{16}{3n} \right)^{\frac{s}{1+ \log_2 d}}.
\ee
The estimate (\ref{Kuhn_result_an}) contains all values of the parameter $n$, not only in preasymptotic range and moreover
the factor and the exponent in the corresponding right hand side are better.

In the case of ``+''-norm, a new preasymptotic bound for the classes of functions
with anisotropic mixed smoothness is obtained by
T.~K\"{u}hn, W.~Sickel and T.~Ullrich~\cite[Theorem~4.11]{KSU3}. In particular, it was proved that
for $s>0$, $d \geq 3$ and all $n \geq 2$ it holds
\begin{equation}\label{f30b}
\sigma_n (\Id\colon H^{s, +}_{\mix}(\T^d) \rightarrow L_2(\T^d)) \leq \left( \frac{C(d)}{n} \right)^{\frac{s}{2(1+ \log_2 (d-1))}},
\end{equation}
where the constant
$C(d) = \left( 1+\frac{1}{d-1} \left( 1+ \frac{2}{\log_2 (d-1)} \right) \right)^{d-1}$.
Note, that this estimate was stated in a more general case.
In the paper one can also find an improved upper bound in the case $2 \leq n \leq e^{d-1}$.

Note also, that  D.~Krieg~\cite[Section~4.1]{Kr18} established earlier results
on the asymptotic and preasymptotic behavior for tensor powers of arbitrary sequences
of polynomial decay, and further estimates for approximation numbers of embeddings of
 mixed order Sobolev functions on the $d$-cube (with different equivalent norms)
 into $L_2([a,b]^d)$, where $[a,b] \in \{ [0,1], [-1,1], [0, 2\pi] \}$.

As to the isotropic Sobolev classes $H^s(\T^d)$,  T.~K\"{u}hn, S.~Mayer and T.~Ullrich~\cite{KMU16} established a connection between approximation numbers of
periodic Sobolev type spaces, where the smoothness weights on the Fourier coefficients are influenced by a (quasi-)norm $\| \cdot \|$ on $\R^d$,
and entropy numbers of embeddings for finite dimensional balls. This allowed them to get preasymptotic estimates
 for isotropic Sobolev spaces and spaces of Gevrey type (in terms of equivalences where the hidden constants do not depend on $n$ and $d$).
 But in~\cite{KMU16} the authors did not indicate explicitly a dependence of these constants on the parameters $s$ and $p$.
 The exact dependence on the dimension $d$ and the smoothness $s$ was given later by T.~K\"{u}hn~\cite{Ku19}.

\paragraph{Acknowledgment.} T.U.\ would like to acknowledge support by the DFG Ul-403/2-1. The authors would like to thank Felix Bartel, David Krieg, Stefan Kunis and Mario Ullrich for valuable comments.

\end{document}